\newtheorem{theorem}{Theorem}[section]
 \newtheorem{lemma}[theorem]{Lemma}
 \newtheorem{proposition}[theorem]{Proposition}
 \theoremstyle{definition}
 \theoremstyle{remark}
 \newtheorem{remark}[theorem]{Remark}
 \newtheorem{example}[theorem]{Example}
 \numberwithin{equation}{section}
\newcommand{\bz}{\mathbb Z}
\DeclareMathOperator{\Hom}{Hom}
\newcommand{\To}{\longrightarrow}
\DeclareMathOperator{\map}{map}
\newcommand{\homo}{\text{Hom}}
 \def\dl{\operatorname{dl}}
 \def\cl{\operatorname{cl}}
 \def\Wl{\operatorname{Wl}}
 \def\bl{\operatorname{bl}}
  \def\Aut{\operatorname{Aut}}
\begin{document}
\title[Homotopy transfer and rational models for mapping spaces]
{Homotopy transfer and rational models\\ for mapping spaces}

\author[U. Buijs]{Urtzi Buijs}
\address{Departamento de \'Algebra, Geometr\'{\i}a y Topolog\'{\i}a, Universidad
de M\'alaga, Ap. 59, 29080 M\'alaga, Spain} \email{ubuijs@uma.es}
\author[J.\,J. Guti\'errez]{Javier J.~Guti\'errez}
\address{Radboud Universiteit Nijmegen, Institute for
Mathematics, Astrophysics, and Particle Physics, Heyendaalseweg 135, 6525 AJ
Nijmegen, The Netherlands} \email{j.gutierrez@math.ru.nl}

\thanks{The authors were supported by the MINECO grant MTM2010-15831 and by the
Generalitat de Catalunya as members of the team 2009~SGR~119. The first-named
author was supported by a U\nobreakdash-Mobility Programme grant of the
European Union's Seventh Framework Programme, G.A. no. 246550 and by the
Junta de Andaluc\'{\i}a grant FQM-213. The second-named author was supported
by the NWO (SPI 61-638)} \keywords{Rational homotopy; $L_\infty$-algebra;
mapping space} \subjclass[2010]{Primary: 55P62; Secondary: 54C35}

\begin{abstract}
By using homotopy transfer techniques in the context of rational homotopy
theory, we show that if $C$ is a coalgebra model of a space $X$, then the
$A_\infty$-coalgebra structure in $H_*(X;\mathbb{Q})\cong H_*(C)$ induced by
the higher Massey coproducts provides the construction of the Quillen minimal
model of~$X$. We also describe an explicit $L_\infty$-structure on the
complex of linear maps $\Hom(H_*(X; \mathbb{Q}), \pi_*(\Omega Y)\otimes
\mathbb{Q})$, where $X$ is a finite nilpotent CW-complex and~$Y$ is a
nilpotent CW-complex of finite type, modeling the rational homotopy type of
the mapping space $\text{map}(X, Y)$. As an application we give conditions on
the source and target in order to detect rational $H$-space structures on the
components.
\end{abstract}

\maketitle

\section{Introduction}
The version up to homotopy of differential graded Lie algebras, called
$L_\infty$-al\-ge\-bras, had its origins in the context of deformation
theory~\cite{SS85} and has been highly used since then in different
geometrical settings~\cite{cs,Kon03}. Recently, the theory of
$L_{\infty}$-algebras has become a useful tool in order to describe rational
homotopy types of spaces~\cite{BFM11, Get09}. In particular, mapping spaces
seem to be well described in these terms~\cite{Ber, Bui09,  BFM, BGM, Laz}.

More concretely, if $C$ is a coalgebra model of a finite nilpotent
CW-complex~$X$ and $L$ is an $L_{\infty}$-model of a finite type rational
CW-complex ~$Y$, then the complex of linear maps $\Hom (C,L)$ admits an
$L_{\infty}$-structure whose geometrical realization is of the rational
homotopy type of $\map (X,Y)$, the space of continuous functions;
see~\cite{Ber, BFM} for details. Similarly, the complex
$\Hom(\overline{C},L)$, where $\overline{C}$ is the kernel of the
augmentation $\varepsilon \colon C\longrightarrow \mathbb{Q}$ is an
$L_\infty$-model for $\map^*(X,Y)$, the space of pointed continuous
functions.

The aim of this paper is to describe an explicit $L_\infty$-structure in the
complex of linear maps $\Hom(H_*(X; \mathbb{Q}),\pi_*(\Omega Y)\otimes
\mathbb{Q})$  that serves as a model for the rational homotopy type of
$\map(X,Y)$.

If $L$ denotes the $L_\infty$-algebra such that $\mathcal{C}^* (L)=(\Lambda
V, d)$ is the Sullivan minimal model of $Y$ (see Section~\ref{prelim} for
details), then we have that $L\cong\pi_*(\Omega Y)\otimes \mathbb{Q}$ as
graded vector spaces. Let $C$ be a coalgebra model of a finite nilpotent
CW-complex~$X$. The strategy will be to use the homotopy retract between $C$
and $H_*(C)\cong H_*(X;\mathbb{Q})$ (denoted by $H$ from now on), defining
the higher Massey coproducts, and the above explicit
$L_\infty$\nobreakdash-struc\-tu\-re on $\Hom(C,L)$ to induce another
homotopy retract between $\Hom(C, L)$ and $\Hom(H,L)$. Then, we apply the
\emph{homotopy transfer theorem}~\cite{Fuk03, KS00, KS01, LV, Mer99} to give
an $L_\infty$-structure on $\Hom(H,L)$ and an explicit formula for it by
means of rooted trees (see Section~\ref{main_section} for further details).

We prove that, indeed, this $L_{\infty}$-structure exhibits $\Hom(H,L)$ as an
$L_\infty$-model for $\map(X,Y)$. Note, however, that a quasi-isomorphism
between two $L_{\infty}$-algebras is not necessarily a quasi-isomorphism
after applying the generalized cochain functor~$\mathcal{C}^{*}$. Therefore,
we cannot deduce directly from the quasi-isomorphism provided by the homotopy
retract that $\Hom(H,L)$ is an $L_{\infty}$-model for the mapping space. The
same argument can be applied by replacing $(C,\Delta)$ and $H$ with
$(\overline{C},\overline{\Delta})$ and $\overline{H}$, respectively, to model
the space of pointed continuous functions $\map^*(X,Y)$.

Moreover, the homotopy retract between $C$ and $H_*(X; \mathbb{Q})$ used
above has its own interest since we show that it provides the construction of
the Quillen minimal model of $X$; see Theorem~\ref{modelo_minimal}.

Finally, using the model for $\map^*(X,Y)$, we give a necessary condition for
the components of mapping spaces to be of the rational homotopy type of an
$H$-space. This problem has been previously considered in~\cite{Bui09, BM08,
FT05}. We prove a variant of the results obtained in these papers in terms of
the cone length ($\cl$), the Whitehead length ($\Wl$) and the bracket length
($\bl$), that does not implicitly assume the coformality of the target space.
Explicitly, we prove that if $\cl(X)=2$ and $\Wl(Y)<\bl(X)$, then all the
components of $\map^*(X,Y)$ are rationally $H$-spaces.

\section{Preliminaries}
\label{prelim} We will rely on known results from rational homotopy theory
for which~\cite{FHT} is a standard reference. We also assume the reader is
aware of the concepts of homotopy operadic algebras being~\cite{LV} an
excellent reference. With the aim of fixing notation we give some definitions
and sketch some results we will need. Every algebraic object considered
throughout the paper is assumed to be a graded vector space over the
rationals.

\subsection{$A_\infty$-coalgebras and $L_{\infty}$-algebras}
\label{sect:prelim} An \emph{$A_{\infty}$-coalgebra} $C$ is a graded vector
space together with a differential graded algebra structure on the tensor
algebra $T^{+}(s^{-1}C)$ on the desuspension of $C$. This is equivalent to
the existence of a family of degree $k-2$ linear maps $\Delta_k\colon
C\longrightarrow C^{\otimes k}$ satisfying the equation
$$
\sum_{k=1}^i\, \sum_{n=0}^{i-k}(-1)^{k+n+kn}({\rm id}^{\otimes i-k-n}\otimes \Delta_k
\otimes {\rm id}^n)\Delta_{i-k+1}=0.
$$
Any differential graded coalgebra $(C,\delta,\Delta)$ is an
$A_{\infty}$-coalgebra with $\Delta_1=\delta$, $\Delta_2=\Delta$ and
$\Delta_k=0$ for $k>2$. We will denote by $\Delta^{(k)}=(\Delta\otimes{\rm
id}\otimes\cdots\otimes {\rm id})\circ\cdots\circ (\Delta\otimes {\rm
id})\circ \Delta$ with $k$ factors, and $\Delta^{(0)}={\rm id}$.

An $A_{\infty}$-coalgebra is \emph{cocommutative} if $\tau\circ\Delta_k=0$
for every $k\ge 1$, where $\tau\colon T(C)\longrightarrow T(C)\otimes T(C)$
denotes the \emph{unshuffle coproduct}, that is,
$$
\tau(a_1\otimes\cdots\otimes a_n)=\sum_{i=1}^n\sum_{\sigma\in S(i, n-i)}
\epsilon_{\sigma}(a_{\sigma(1)}\otimes\cdots\otimes a_{\sigma(i)})\otimes
(a_{\sigma(i+1)}\otimes\cdots\otimes a_{\sigma(n)}),
$$
where $\epsilon_{\sigma}$ is the signature of $\sigma$ and $S(i,n-i)$ denotes
the set of $(i, n-i)$-shuffles, i.e., permutations $\sigma$ of $n$-elements
such that $\sigma(1)<\cdots<\sigma(i)$ and $\sigma(i+1)<\cdots<\sigma(n)$.

Let $(C, \{\Delta_k\})$ and $(C',\{\Delta'_k\})$ be two
$A_{\infty}$-coalgebras. A \emph{morphism of $A_{\infty}$-coal\-ge\-bras}
from $C$ to $C'$ is a morphism $f\colon C\to C'$ compatible with $\Delta_k$
and $\Delta'_k$. An \emph{$A_{\infty}$\nobreakdash-morphism} from $C$ to $C'$
is a morphism $f\colon T^+(s^{-1}C)\to T^+(s^{-1}C')$ of differential graded
algebras. This is equivalent to the existence of a family of degree $k-1$
maps $f^{(k)}\colon C\to C'^{\otimes k}$ satisfying the usual relations
involving $\Delta_k$ and $\Delta_k'$. An $A_{\infty}$-morphism is a
\emph{quasi-isomorphism} if $f^{(1)}$ is a quasi-isomorphism of complexes.

An \emph{$L_{\infty}$-algebra} or \emph{strongly homotopy Lie algebra} is a
graded vector space $L$ together with a differential graded coalgebra
structure on $\Lambda^+ sL$, the cofree graded cocommutative coalgebra
generated by the suspension. The existence of this structure on $\Lambda^+
sL$ is equivalent to the existence of degree $k-2$ linear maps $\ell_k\colon
L^{\otimes k}\to L$, for $k\ge 1$, satisfying the following two conditions:
\begin{itemize}
\item[{\rm (i)}] For any permutation $\sigma$ of $k$ elements,
$$
\ell_k(x_{\sigma(1)},\ldots, x_{\sigma(k)})=\epsilon_{\sigma}\epsilon\ell_k(x_1,\ldots, x_k),
$$
where $\epsilon_{\sigma}$ is the signature of the permutation and
$\epsilon$ is the sign given by the Koszul convention.
\item[{\rm (ii)}] The \emph{generalized Jacobi identity} holds, that is
$$
\sum_{i+j=n+1}\sum_{\sigma\in S(i, n-i)}\epsilon_{\sigma}\epsilon(-1)^{i(j-1)}
\ell_{n-i}(\ell_i(x_{\sigma(1)},\ldots,x_{\sigma(i)}), x_{\sigma(i+1)},\ldots, x_{\sigma(n)})=0,
$$
where $S(i,n-i)$ denotes the set of $(i, n-i)$-shuffles.
\end{itemize}
Every differential graded Lie algebra $(L, \partial)$ is an
$L_{\infty}$-algebra by setting $\ell_1=\partial$, $\ell_2=[- ,-]$ and
$\ell_k=0$ for $k>2$. An $L_{\infty}$-algebra $(L,\{\ell_k\})$ is called
\emph{minimal} if $\ell_1=0$.

Let $(L, \{\ell_k\})$ and $(L',\{\ell'_k\})$ be two $L_{\infty}$-algebras. A
\emph{morphism of $L_{\infty}$-algebras} from $L$ to $L'$ is a morphism
$f\colon L\to L'$ that is compatible with $\ell_k$ and $\ell'_k$. An
\emph{$L_{\infty}$-morphism} from $L$ to $L'$ is a morphism $f\colon
\Lambda^+ sL\to \Lambda^+ sL'$ of differential graded coalgebras.

Any $L_{\infty}$-morphism is completely determined by the projection $\pi
f\colon\Lambda^+ s L\to sL'$ which is the sum of a system of skew-symmetric
morphisms $f^{(k)}\colon L^{\otimes k}\to L'$ of degree $1-k$. The morphisms
$f^{(k)}$ satisfy an infinite sequence of equations involving the brackets
$\ell_k$ and $\ell'_k$; see for example~\cite{Kon03}. In particular, for
$k=1$, we have that $\ell_1'f^{(1)}-f^{(1)}\ell_1=0$. Therefore
$f^{(1)}\colon (L,\ell_1)\to (L',\ell_1')$ is a map of complexes. Any
morphism of  $L_{\infty}$-algebras is an $L_{\infty}$-morphism of
$L_{\infty}$-algebras. As in the case of $A_{\infty}$\nobreakdash-coalgebras,
an $L_{\infty}$-morphism is a \emph{quasi-isomorphism} if $f^{(1)}$ is a
quasi-isomorphism of complexes. The following result can be found
in~\cite[Theorem~4.6]{Kon03}.
\begin{theorem}
Let $L$ and $L'$ be two $L_{\infty}$-algebras. If $f$ is a quasi-isomorphism
of $L_{\infty}$-algebras from $L$ to $L'$, then there exists another
$L_{\infty}$-morphism from $L'$ to $L$ inducing the inverse isomorphism
between homology of complexes $(L,\ell_1)$ and $(L',\ell_1')$.
\end{theorem}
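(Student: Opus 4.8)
The plan is to produce the $L_\infty$-morphism $g\colon L'\to L$ by constructing its components $g^{(n)}\colon (L')^{\otimes n}\to L$ recursively on the arity $n$, or equivalently by extending a chosen linear map to a morphism $G\colon \Lambda^+ sL'\to \Lambda^+ sL$ of differential graded coalgebras. If $D$ and $D'$ denote the codifferentials encoding the brackets $\{\ell_k\}$ and $\{\ell'_k\}$, the defining condition $DG=GD'$, read off on the projection to $sL$, unfolds into one equation for each $n\ge 1$. Isolating in the $n$-th equation the terms built only from the linear operations $\ell_1$ and $\ell'_1$ from the rest, it can be rewritten as
$$
\partial\, g^{(n)}=P_n ,
$$
where $\partial$ is the differential of the complex $\Hom((L')^{\otimes n},L)$ induced by $\ell_1$ and $\ell'_1$, and $P_n$ is a universal expression in the higher brackets $\ell_k,\ell'_k$ ($k\ge 2$) and the previously defined components $g^{(1)},\dots,g^{(n-1)}$. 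The whole construction thus reduces to solving these equations inductively.

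For the initial step I use that every object is a graded $\mathbb{Q}$-vector space. Since $f^{(1)}\colon (L,\ell_1)\to (L',\ell'_1)$ is a quasi-isomorphism of complexes over a field, it is a chain homotopy equivalence; I choose $g^{(1)}\colon (L',\ell'_1)\to (L,\ell_1)$ to be a homotopy inverse, so that $H(g^{(1)})=H(f^{(1)})^{-1}$. Splitting each complex as the direct sum of (a copy of) its homology and an acyclic complement yields such a $g^{(1)}$ together with contracting homotopies, which are precisely what is needed to solve the higher equations once their right-hand sides are known to be boundaries.

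The inductive step is the core of the proof. Assuming $g^{(1)},\dots,g^{(n-1)}$ have been found so that the first $n-1$ equations hold, one checks first that $P_n$ is a cycle, $\partial P_n=0$; this is the standard bookkeeping verification that the $L_\infty$-morphism equations are self-consistent, and it rests on the generalized Jacobi identities for $\{\ell_k\}$ and $\{\ell'_k\}$ together with the induction hypothesis. One can then solve $\partial g^{(n)}=P_n$ exactly when the class of $P_n$ vanishes in $H\bigl(\Hom((L')^{\otimes n},L)\bigr)\cong \Hom\bigl(H(L')^{\otimes n},H(L)\bigr)$, the identification following from the K\"unneth theorem over $\mathbb{Q}$. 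I expect the vanishing of this obstruction class to be the main difficulty, and it is here that the choice of $g^{(1)}$ realizing the homological inverse of $H(f^{(1)})$ becomes indispensable. The transparent way to guarantee it is to pass to homology: via homotopy transfer, $f$ induces an $L_\infty$-morphism between the minimal (that is, $\ell_1=0$) transferred structures on $H(L)$ and $H(L')$, whose linear part is the isomorphism $H(f^{(1)})$. A minimal $L_\infty$-morphism with invertible linear part admits an $L_\infty$-inverse built by a purely algebraic recursion, with no $\ell_1$-obstruction to solve at any stage; transporting this inverse back along the transfer morphisms produces $g$ and simultaneously shows that every obstruction $[P_n]$ above must cancel. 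Once each $g^{(n)}$ is obtained, the resulting $g=\{g^{(n)}\}$ is the required $L_\infty$-morphism, and its first component induces $H(f^{(1)})^{-1}$ by construction.
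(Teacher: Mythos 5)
The paper itself gives no proof of this statement: it quotes it from \cite[Theorem~4.6]{Kon03}, and in the paragraph that follows recalls the two ingredients of Kontsevich's argument, namely that every $L_\infty$-algebra is $L_\infty$-isomorphic to the direct sum of a minimal one and a linear contractible one, and that every quasi-isomorphism between minimal $L_\infty$-algebras is an isomorphism. The second half of your proposal (transfer the structures to homology, invert the induced morphism of minimal $L_\infty$-algebras using that its linear part is invertible, transport back) is exactly this route, so in spirit you follow the intended proof. Note, however, that your obstruction-theoretic scaffolding $\partial g^{(n)}=P_n$ ends up being redundant: you never verify $[P_n]=0$ directly, but deduce it from the existence of the direct construction, so the actual proof is entirely the second half.

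The genuine gap is in the phrase ``transporting this inverse back along the transfer morphisms.'' The homotopy transfer theorem, as stated in the paper and in its standard form, produces $L_\infty$-quasi-isomorphisms in one direction only: $i_\infty\colon H(L)\to L$ and $i'_\infty\colon H(L')\to L'$, extending the inclusions. To define the induced morphism $\hat f\colon H(L)\to H(L')$ between the minimal models you already need an $\infty$-morphism $p'_\infty\colon L'\to H(L')$ in the projection direction, and you need it a second time to form the composite $g=i_\infty\circ \hat f^{-1}\circ p'_\infty\colon L'\to L$. This morphism is not supplied by the transfer theorem you invoke, and you cannot produce it by inverting $i'_\infty$, since inverting an $L_\infty$-quasi-isomorphism is precisely the theorem being proved; as written, the argument is circular at this point. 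There are two standard repairs: either invoke the stronger form of the transfer theorem in which the projection $p$ (and the homotopy $h$) also extend to $\infty$-morphisms, as in \cite{LV}, or argue as in \cite{Kon03} via the decomposition $L'\cong M'\oplus C'$ with $M'$ minimal and $C'$ linear contractible, where the projection onto the summand $M'$ is a strict $L_\infty$-morphism and a quasi-isomorphism. With either fix the rest of your argument is sound: the lemma that an $L_\infty$-morphism between minimal $L_\infty$-algebras with invertible linear part admits a strict $L_\infty$-inverse is correct and standard, and your composite then induces $H(f^{(1)})^{-1}$ on homology as required.
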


An $L_{\infty}$-algebra $(L,\{\ell_k\})$ is called \emph{linear contractible}
if $\ell_k=0$ for $k\ge 2$ and $H_*(L,\ell_1)=0$. Since the property of being
linear contractible is not invariant under $L_{\infty}$-isomorphisms, we say
that $L$ is \emph{contractible} if $L$ is isomorphic as an
$L_{\infty}$\nobreakdash-algebra to a linear contractible one. As stated in
\cite[Lemma~4.9]{Kon03}, any $L_{\infty}$\nobreakdash-algebra is
$L_{\infty}$\nobreakdash-isomorphic to the direct sum of a minimal
$L_{\infty}$-algebra and a linear contractible one. Following~\cite{Kon03},
one can prove that any quasi-isomorphism between minimal
$L_{\infty}$-algebras is an isomorphism. Thus, the set of equivalence classes
of $L_{\infty}$-algebras up to quasi-isomorphisms can be identified with the
set of equivalence classes of minimal $L_{\infty}$-algebras up to
$L_{\infty}$-isomorphisms.

The \emph{Maurer--Cartan set} of an $L_{\infty}$-algebra $L$ is the set of
elements $z\in L_{-1}$ such that the infinite series
$$
\sum_{k\ge 1}\frac{1}{k!}\ell_k(z,\stackrel{(k)}{\ldots}, z)
$$
is a finite sum and it is equal to zero. We will denote the set of
Maurer--Cartan elements in $L$ by ${\rm MC}(L)$. If $L$ is an
$L_{\infty}$-algebra and $z\in{\rm MC}(L)$, then
$$
\ell_k^z(x_1,\ldots, x_k)=\sum_{i\ge 0}\frac{1}{i!}\ell_{i+k}(z,\stackrel{(i)}{\ldots},z,x_1,\ldots, x_k)
$$
defines a new $L_{\infty}$-structure denoted by $L^z$ whenever the series is
a finite sum (cf.~\cite[Proposition~4.4]{Get09}). The \emph{perturbed and
truncated} $L_{\infty}$-structure on $L$ is the $L_{\infty}$-algebra
$L^{(z)}$, whose underlying graded vector space is given by
$$
\left(L^{(z)}\right)_i=\left\{
\begin{array}{cl}
L_i & \mbox{if $i>0$}, \\
Z_{\ell_1^z}(L_0) & \mbox{if $i=0$}, \\
0 & \mbox{if $i<0$},
\end{array}
\right.
$$
where $Z_{\ell_1^z}(L_0)$ denotes the space of cycles for the differential
$\ell_1^z$, and with the same brackets as $L^z$.
\begin{remark}
If we want to extend the definition of the classical cochain
functor~$\mathcal{C}^*$ \cite[\S 23]{FHT} from the category of differential
graded Lie algebras to the category of $L_\infty$\nobreakdash-algebras, in
order to define a realization functor for $L_\infty$-algebras as the
composition of the cochain functor and Sullivan's realization functor
\cite[\S 17]{FHT}, then we should constrain the category of
$L_\infty$-algebras. For a discussion of the exact technical requirements
needed with definitions and examples we refer to \cite[\S 1] {BM}. Since all
the $L_\infty$-algebras involved in the next sections are of this type of
{\em mild} $L_\infty$-algebras we consider from now on that all
$L_\infty$-algebras are of this kind.
\end{remark}

Then we have that an $L_{\infty}$-algebra structure on $L$ is the same as a
commutative differential graded algebra structure on $\Lambda(sL)^{\sharp}$,
denoted by $\mathcal{C}^*(L)$, where~$\sharp$ stands for the dual vector
space. More explicitly, if $V$ and $sL$ are dual graded vector spaces, then
$\mathcal{C}^*(L)=(\Lambda V, d)$ with $d=\sum_{j\ge 1} d_j$ and
\begin{equation}
\label{pairing}
 \langle d_j v; sx_1\wedge \cdots\wedge s_j\rangle=(-1)^{\epsilon}\langle v; s\ell_j(x_1,\ldots, x_j)\rangle,
\end{equation}
where $\langle -;-\rangle$ is defined as an extension of the pairing induced
by the isomorphism between $V$ and $(sL)^{\sharp}$, $d_jv\subset \Lambda^j V$
and $\epsilon$ is the sign given by the Koszul convention.

Conversely, if $(\Lambda V, d)$ is a commutative differential graded algebra
of finite type, then an $L_{\infty}$-algebra structure on $s^{-1}V^{\sharp}$
is uniquely determined by the condition $(\Lambda V ,d)=\mathcal{C}^*(L)$. 	

\subsection{The homotopy transfer theorem}
In this section we recall how to transfer $A_{\infty}$- and
$L_{\infty}$-structures along homotopy retracts. This will be the main tool
used in Section~\ref{main_section} to describe an explicit
$L_{\infty}$-structure on the complex of linear maps $\Hom(H_*(X;
\mathbb{Q}),\pi_*(\Omega Y)\otimes \mathbb{Q})$.

Let $(A, d_A)$ and $(V, d_V)$ be two complexes. We say that $V$ is a
\emph{homotopy retract} of $A$ if there exist maps
$$
\xymatrix{ \ar@(ul,dl)@<-7ex>[]_h  & (A,d_A )
\ar@<0.75ex>[r]^-p & (V,d_V) \ar@<0.75ex>[l]^-i }
$$
such that ${\rm id}_A-ip=d_A h+hd_A$ and $i$ is a quasi-isomorphism. If $(A,
V, i, p, h)$ is a homotopy retract, then it is possible to transfer
$A_{\infty}$- and $L_{\infty}$-structures from $A$ to $V$ with explicit
formulae. This is in fact a particular instance of the so-called
\emph{homotopy transfer theorem} \cite{Fuk03, KS00, KS01, LV, Mer99}, which
goes back to \cite{GS86, GLS91, HK91, Kai83} for the case of
$A_{\infty}$-structures. Before stating it, we need to introduce some
notation on rooted trees.

Let $T_k$ (respectively $PT_k$) be the set of isomorphism classes of directed
rooted trees (respectively planar rooted trees) with internal vertices of
valence at least two and exactly $k$ leaves. Let $(C, \{\Delta_k\})$ be an
$A_{\infty}$-coalgebra and let $(C,V, i, p, h)$ be a homotopy retract of $C$.
For each planar tree $T$ in $PT_k$, we define a linear map $\Delta_T\colon
V\to V^{\otimes k}$ as follows. The leaves of the tree are labeled by $p$,
each internal edge is labelled by $h$  and the root edge is labelled by $i$.
Every internal vertex $v$ is labelled by the operation $\Delta_r$, where $r$
is the number of input edges of $v$. Moving up from the root to the leaves
one defines $\Delta_T$ as the composition of the different labels. For
example, the tree $T$
$$
\xymatrixcolsep{1pc}
\xymatrixrowsep{1pc}
\entrymodifiers={=<1pc>} \xymatrix{
*{^p}\ar@{-}[dr] & *{} & *{^p}\ar@{-}[dl] & *{} & *{^p}\ar@{-}[dr] & *{^p}\ar@{-}[d]& *{^p} \ar@{-}[dl]\\
*{} & {\Delta_2} \ar@{-}[drr]|h & *{} & *{} & *{} & \Delta_3\ar@{-}[dll]|h & *{} \\
*{} & *{} & *{} & \Delta_2\ar@{-}[d] & *{} & *{} & *{} \\
*{} & *{} & *{} & *{_{\stackrel{}{i}}} & *{} & *{} & *{} \\
}
$$
yields the map $\Delta_T=(((p\otimes p)\circ \Delta_2\circ
h)\otimes((p\otimes p\otimes p)\circ \Delta_3\circ h))\circ \Delta_2\circ i.
$

Similarly, if $(L,\{\ell_k\})$ is an $L_{\infty}$-algebra and $(L,V, i, p,
h)$ be a homotopy retract of $L$, then each tree $T$ in $T_k$ gives rise to a
linear map $\ell_T\colon \Lambda^k V\to V$ in the following way. Let
$\widetilde T$ be a planar embedding of $T$. If we label the leaves of the
tree by $i$, each internal edge by $h$, the root edge by $p$ and each
internal vertex by $\ell_k$, where $k$ is the number of input edges, then
this planar embedding defines a linear map
$$
\ell_{\widetilde{T}}\colon V^{\otimes k}\longrightarrow V
$$
by moving down from the leaves to the root, according to the usual operadic
rules. For example, for the same tree as before, the labeling reads
$$
\xymatrixcolsep{1pc}
\xymatrixrowsep{1pc}
\entrymodifiers={=<1pc>} \xymatrix{
*{^i}\ar@{-}[dr] & *{} & *{^i}\ar@{-}[dl] & *{} & *{^i}\ar@{-}[dr] & *{^i}\ar@{-}[d]& *{^i} \ar@{-}[dl]\\
*{} & {\ell_2} \ar@{-}[drr]|h & *{} & *{} & *{} & \ell_3\ar@{-}[dll]|h & *{} \\
*{} & *{} & *{} & \ell_2\ar@{-}[d] & *{} & *{} & *{} \\
*{} & *{} & *{} & *{_{\stackrel{}{p}}} & *{} & *{} & *{} \\
}
$$
and the linear map $\ell_{\widetilde{T}}$ corresponds to
$$
p\circ\ell_2\circ((h\circ \ell_2\circ(i\otimes i))\otimes (h\circ \ell_3\circ (i\otimes i\otimes i))).
$$
Then, we define $\ell_T$ as the composition of $\ell_{\widetilde{T}}$ with
the symmetrization map $\Lambda^k V\to V^{\otimes k}$ given by
$$
v_1\wedge\cdots\wedge v_k\mapsto \sum_{\sigma\in S_k}\epsilon_{\sigma}\epsilon\, v_{\sigma(1)}\otimes\cdots\otimes v_{\sigma(n)},
$$
where $S_k$ denotes the symmetric group on $k$ letters, $\epsilon_{\sigma}$
denotes the signature of the permutation and $\epsilon$ stands for the sign
given by the Koszul convention.

Part (i) of the following theorem follows from the \emph{tensor trick} of
\cite{GLS91} (see also \cite[Section 2]{BR10} for details). The second part
is \cite[Theorem 10.3.9]{LV}.
\begin{theorem}[Homotopy transfer theorem]
Let $(A, d_A)$ and $(V,d_V)$ be two complexes and let $(A, V, i, p, h)$  be a
homotopy retract. Then the following hold:
\begin{itemize}
\item[{\rm (i)}] If $A=(C, \{\Delta_k\})$ is an $A_{\infty}$-coalgebra,
    then there exists an $A_{\infty}$-coalgebra structure $\{\Delta'_k\}$
    on $V$ and an $A_{\infty}$-quasi-isomorphism
$$
i_{\infty}\colon (V,\{\Delta'_k\})\longrightarrow (C,\{\Delta_k\})
$$
such that $\Delta'_1=d_V$ and $i^{(1)}_{\infty}=i$. Moreover, the
transferred higher comultiplications can be explicitly described by the
formula
$$
\Delta'_k=\sum_{T\in PT_k}\Delta_T.
$$
\item[{\rm (ii)}] If $A=(L, \{\ell_k\})$ is an $L_{\infty}$-algebra, then
    there exists an $L_{\infty}$-algebra structure $\{\ell'_k\}$ on $V$
    and an $L_{\infty}$-quasi-isomorphism
$$
i_{\infty}\colon (V,\{\ell'_k\})\longrightarrow (L,\{\ell_k\})
$$
such that $\ell'_1=d_V$ and $i^{(1)}_{\infty}=i$. Moreover, the
transferred higher brackets can be explicitly described by the formula
$$
\ell'_k=\sum_{T\in T_k}\frac{\ell_T}{|\Aut (T)|},
$$
where $\Aut (T)$ is the automorphism group of the tree $T$. $\hfill\qed$
\end{itemize}
\label{htt}
\end{theorem}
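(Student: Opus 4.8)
The plan is to deduce both statements from the \emph{homological perturbation lemma} (HPL), applied to the (co)free object that encodes the structure, and then to unwind the resulting geometric series into the stated tree sums; this is the perturbation-theoretic route that goes back to \cite{GS86, GLS91, HK91, Kai83} and is subsumed by the operadic formalism of \cite{LV}. For part~(i), recall from the definitions above that an $A_{\infty}$-coalgebra structure on $C$ is exactly a degree $-1$ derivation differential $D=\sum_{k\ge 1}D_k$ on the tensor algebra $T^+(s^{-1}C)$, the component $D_k$ being determined by $\Delta_k$. First I would promote the homotopy retract $(C,V,i,p,h)$ of complexes to a homotopy retract between $T^+(s^{-1}C)$ and $T^+(s^{-1}V)$: the inclusion and projection are the algebra maps $T(s^{-1}i)$ and $T(s^{-1}p)$ obtained by applying $s^{-1}i$, $s^{-1}p$ factorwise, the transferred length-preserving differential is $T(d_V)$, and the contracting homotopy is the standard tensor homotopy $H=\sum_{a+1+b=n}(s^{-1}ip)^{\otimes a}\otimes s^{-1}h\otimes \mathrm{id}^{\otimes b}$ on the length-$n$ component. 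One checks directly that this is again a homotopy retract and that the side conditions $h^2=0$, $ph=0$, $hi=0$ (which may be arranged without loss of generality) lift to $H$.

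Next I would write the full differential on $T^+(s^{-1}C)$ as $T(d_C)+\partial$ with $\partial=\sum_{k\ge 2}D_k$ the perturbation. Since each $D_k$ strictly raises the number of tensor factors, the operator $H\partial$ is nilpotent on every fixed length component, so the series $(\mathrm{id}-H\partial)^{-1}=\sum_{n\ge 0}(H\partial)^n$ is finite there and the HPL applies. It produces a perturbed square-zero operator $D'=T(d_V)+T(s^{-1}p)\,\partial\,(\mathrm{id}-H\partial)^{-1}\,T(s^{-1}i)$ on $T^+(s^{-1}V)$, and a perturbed inclusion $(\mathrm{id}-H\partial)^{-1}T(s^{-1}i)$. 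Because the contraction is the one induced on the free algebra, a refinement of the perturbation lemma guarantees that $D'$ is again a derivation and the perturbed inclusion an algebra map, so that $D'$ defines an $A_{\infty}$-coalgebra structure $\{\Delta'_k\}$ on $V$ and the perturbed inclusion is an $A_{\infty}$-morphism $i$; reading off leading terms gives $\Delta'_1=d_V$ and $i^{(1)}=i$, the latter being a quasi-isomorphism by hypothesis. The remaining, essentially bookkeeping, step is to expand $\Delta'_k=T(s^{-1}p)\,\partial\,(\mathrm{id}-H\partial)^{-1}T(s^{-1}i)$ on the piece $V\to V^{\otimes k}$ and match each monomial $\partial(H\partial)^n$ with a planar rooted tree in $PT_k$: every instance of a $\Delta_r$ inside $\partial$ is an internal vertex of valence $r$, each $h$ coming from $H$ labels an internal edge, the leaves carry $p$ and the root carries $i$. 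This yields exactly $\Delta'_k=\sum_{T\in PT_k}\Delta_T$.

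For part~(ii) I would run the identical argument with the cofree cocommutative coalgebra $\Lambda^+ sL$ in place of $T^+(s^{-1}C)$, an $L_{\infty}$-structure being a square-zero coderivation $Q=\sum_{k\ge 1}\ell_k$ of $\Lambda^+ sL$. The retract, the symmetric (cocommutative) tensor homotopy, and the perturbation $\partial=\sum_{k\ge 2}\ell_k$ are set up as before, and the HPL transfers $Q$ to a codifferential on $\Lambda^+ sV$, i.e.\ an $L_{\infty}$-structure $\{\ell'_k\}$ with $\ell'_1=d_V$, together with the $L_{\infty}$-quasi-isomorphism $i$ with $i^{(1)}=i$. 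The only genuine difference is combinatorial: on the symmetric coalgebra the monomials of $(\mathrm{id}-H\partial)^{-1}$ are indexed by planar embeddings $\widetilde T$ of abstract (non-planar) rooted trees, and the factorials carried by the cocommutative (divided-power) coproduct reorganize the planar sum into a sum over isomorphism classes $T\in T_k$. Averaging over the symmetric-group action, as encoded by the symmetrization map $\Lambda^k V\to V^{\otimes k}$ used to define $\ell_T$, produces precisely the weight $1/|\Aut(T)|$ and gives $\ell'_k=\sum_{T\in T_k}\ell_T/|\Aut(T)|$.

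The part I expect to be delicate is exactly this last sign-and-symmetry bookkeeping: verifying that the Koszul signs generated by commuting the desuspensions, the homotopies $h$ and the operations $\Delta_r$ or $\ell_r$ past one another in the perturbation series agree with the signs built into $\Delta_T$ and $\ell_T$, and that counting planar embeddings of a fixed abstract tree reproduces the factor $1/|\Aut(T)|$ rather than some other rational weight. The two structural points I would settle first, since everything else is formal, are that the complex-level retract genuinely lifts to the (co)free level so that the HPL hypotheses hold verbatim, and that the perturbed differential and perturbed inclusion remain a (co)derivation and a (co)algebra morphism respectively; granting these, the explicit tree formulae follow by the expansion described above.
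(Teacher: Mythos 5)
The paper itself contains no proof of Theorem~\ref{htt}: it is stated with a \textsl{qed} box and attributed to the literature \cite{Fuk03, KS00, KS01, LV, Mer99}, so there is no internal argument to compare yours against. Your perturbation-lemma strategy is essentially the classical route of \cite{GS86, GLS91, HK91, Kai83}, i.e.\ one of the quoted traditions, and its overall architecture (lift the retract to the (co)free object, perturb, expand the geometric series into trees) is sound.

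There is nevertheless a genuine gap, and it sits exactly in the step you wave through as ``may be arranged without loss of generality''. The notion of homotopy retract used in the statement (and defined in the paper) requires only $\mathrm{id}_A-ip=d_Ah+hd_A$ with $i$ a quasi-isomorphism; it requires neither $pi=\mathrm{id}_V$ nor the side conditions $ph=0$, $hi=0$, $h^2=0$. The homological perturbation lemma you invoke takes as input a \emph{special deformation retract} (in particular $pi=\mathrm{id}_V$), and the tensor-trick expansion reproduces the asserted tree sums only when the side conditions hold as well. Arranging these conditions is not harmless here, because both sides of the claimed identity $\Delta'_k=\sum_{T\in PT_k}\Delta_T$ depend on the data $(i,p,h)$: after modifying the retract you will have proved the formula for the \emph{new} data, not for the data given in the statement. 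The failure is already visible for $k=3$: with your homotopy, which on words of length two reads $H=h\otimes\mathrm{id}+ip\otimes h$, the relevant component of the expansion is, up to signs and suppressing desuspensions,
$$
\begin{aligned}
T(s^{-1}p)\,\partial H\partial\,T(s^{-1}i)\big|_{V\to V^{\otimes 3}}
={}&\bigl(p^{\otimes 2}\Delta_2 h\otimes p\bigr)\Delta_2 i
\pm\bigl(pip\otimes p^{\otimes 2}\Delta_2 h\bigr)\Delta_2 i\\
&\pm\bigl(ph\otimes p^{\otimes 2}\Delta_2\bigr)\Delta_2 i
\pm\bigl(p^{\otimes 2}\Delta_2 ip\otimes ph\bigr)\Delta_2 i,
\end{aligned}
$$
and this equals $\sum_{T\in PT_3}\Delta_T$ only after using $ph=0$ to kill the last two terms and $pi=\mathrm{id}_V$ to convert $pip$ into $p$. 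So, as written, your argument proves the theorem under strictly stronger hypotheses than those stated.

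Two repairs are available. Either strengthen the hypotheses to a special deformation retract with side conditions---which would suffice for everything this paper does, since the only retract to which Theorem~\ref{htt} is later applied, diagram~(\ref{homotopy_retract_01}), does satisfy $pi=\mathrm{id}$, $pk=0$, $ki=0$, $k^2=0$---or prove the stated generality directly as in \cite{LV}: define $\Delta'_k$ (resp.\ $\ell'_k$) by the tree sums and verify the $A_\infty$ (resp.\ $L_\infty$) relations by expanding $dh+hd=\mathrm{id}-ip$ on each internal edge; the $\mathrm{id}$ terms contract edges and produce the quadratic terms of the relations, while the $ip$ terms split the tree into compositions of lower tree sums, and no side conditions are needed. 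Two further deferrals in your write-up should also be flagged as real inputs rather than refinements: that the perturbed differential on $T^{+}(s^{-1}V)$ is again a derivation and the perturbed inclusion an algebra map is precisely the content of the algebra perturbation lemma of \cite{HK91, GLS91} (the lifted homotopy $H$ is not a derivation, so the bare perturbation lemma gives none of this); and in part (ii) the emergence of the weights $1/|\Aut(T)|$ from the symmetric-coalgebra combinatorics is where \cite{Mer99, KS00, LV} do the real work.
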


\subsection{Rational models for mapping spaces}
\label{sect:rational_models_map} In~\cite{Sul78} Sullivan associated to each
nilpotent space $Z$ a commutative differential graded algebra $A_{PL}(Z)$. In
fact, there is an adjoint pair
$$
\xymatrix{ A_{PL}\colon {\rm sSets^{op}}\ar@<0.75ex>[r] &
{\rm CDGA}\colon {\langle - \rangle}\ar@<0.75ex>[l]},
$$
where ${\rm CDGA}$ is the category of commutative differential graded
algebras, ${\rm sSets}$ is the category of simplicial sets, and $\langle -
\rangle$ denotes the simplicial realization. The \emph{minimal model} of $Z$
is defined as the minimal model $(\Lambda V, d)$ of $A_{PL}(Z)$.
A~\emph{model} of $Z$ is a graded commutative differential algebra
quasi-isomorphic to its minimal model. For more details, we refer
to~\cite{FHT, Sul78}.

By a model of a not necessarily connected space~$Z$, such that all its
components are nilpotent (or a map between them), we mean a
$\mathbb{Z}$-graded commutative differential graded algebra (or a morphism)
whose simplicial realization, in the sense of~\cite{Sul78}, has the same
homotopy type as the singular simplicial approximation of~$Z_{\mathbb{Q}}$.
Similarly, by an $L_{\infty}$-model of a space $Z$ as above, we mean an
$L_{\infty}$-algebra $L$ such that $\mathcal{C}^*(L)$ is a commutative
differential graded algebra model of~$Z$.

\begin{proposition}[\cite{Ber, BS97, BM}]
Let $L$ be an $L_{\infty}$-algebra and $z\in L_{-1}$ a Maurer--Cartan
element. Then there is  a homotopy equivalence
$$
\langle \mathcal{C}^*(L^{(z)})\rangle\stackrel{\simeq}{\longrightarrow}
\langle \mathcal{C}^*(L)\rangle_{z},
$$
where $\langle \mathcal{C}^*(L)\rangle_{z}$ denotes the connected component
containing the $0$-simplex associated to~$z$. $\hfill\qed$
\end{proposition}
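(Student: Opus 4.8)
The plan is to factor the asserted equivalence into two independent steps, a \emph{twisting} step that moves the basepoint from $z$ to $0$, and a \emph{truncation} step that cuts down to the connected component. The bridge between them is the algebraic observation that $L^{(z)}$ coincides with the truncation at the zero Maurer--Cartan element of the twisted algebra $L^z$; that is, $L^{(z)}=(L^z)^{(0)}$. Indeed, twisting $L^z$ by $0$ returns $L^z$, and the truncation then retains $(L^z)_i$ in degrees $i>0$, the cycles $Z_{\ell_1^z}(L_0)$ in degree $0$, and nothing in negative degrees, which is exactly $L^{(z)}$. So it suffices to produce homotopy equivalences $\langle\mathcal{C}^*((L^z)^{(0)})\rangle\stackrel{\simeq}{\To}\langle\mathcal{C}^*(L^z)\rangle_0$ and $\langle\mathcal{C}^*(L^z)\rangle_0\simeq\langle\mathcal{C}^*(L)\rangle_z$, and compose them.

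For the twisting step I would use that, under the standing mildness hypothesis, the functor $L\mapsto\langle\mathcal{C}^*(L)\rangle$ is homotopy equivalent to the Deligne--Getzler nerve $\mathrm{MC}_\bullet(L)=\mathrm{MC}(L\otimes A_{PL}(\Delta^\bullet))$; see \cite{Get09, Ber, BM}. On the nerve, subtracting the constant element $z$ gives a simplicial bijection $\mathrm{MC}_\bullet(L)\cong\mathrm{MC}_\bullet(L^z)$ sending the vertex $z$ to the vertex $0$, because $\omega$ is a Maurer--Cartan element of $L\otimes A_{PL}(\Delta^n)$ precisely when $\omega-z$ is one for $L^z\otimes A_{PL}(\Delta^n)$. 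Restricting to the component of $z$ on the left and of $0$ on the right yields the desired equivalence $\langle\mathcal{C}^*(L)\rangle_z\simeq\langle\mathcal{C}^*(L^z)\rangle_0$, which reduces everything to the case $z=0$.

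For the truncation step I would first check that $L^{(z)}=(L^z)^{(0)}$ is a sub-$L_\infty$-algebra of $L^z$: the differential $\ell_1^z$ preserves it because the image of a differential consists of cycles, and the higher brackets $\ell_k^z$ raise degree by $k-2\ge 0$, so they can only land in degree $0$ when all inputs sit in degree $0$, in which case the $n=2$ identity shows the bracket of two cycles is again a cycle. The inclusion $\iota\colon L^{(z)}\hookrightarrow L^z$ then induces, after $\langle\mathcal{C}^*(-)\rangle$, a map $\langle\mathcal{C}^*(L^{(z)})\rangle\To\langle\mathcal{C}^*(L^z)\rangle$ carrying the unique vertex $0$ to $0$, hence landing in $\langle\mathcal{C}^*(L^z)\rangle_0$. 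To see it is an equivalence I would invoke the identification $\pi_n(\langle\mathcal{C}^*(M)\rangle,0)\cong H_{n-1}(M)$ at the zero Maurer--Cartan point: the truncation leaves $M=L^z$ unchanged in degrees $\ge 1$ and only replaces $M_0$ by its cycles, so $H_m$ is unchanged for all $m\ge 1$, while the computation $H_0(L^{(z)})=Z_{\ell_1^z}(L_0)/\mathrm{im}\,\ell_1^z=H_0(L^z)$ shows $H_0$ is unchanged as well. Thus $\iota$ induces isomorphisms on all homotopy groups; since the source is connected, as $(L^{(z)})_{-1}=0$ forces a single Maurer--Cartan element, and the target is the single component $\langle\mathcal{C}^*(L^z)\rangle_0$, Whitehead's theorem for nilpotent rational spaces gives the equivalence.

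I expect the main obstacle to be the twisting step, specifically making rigorous the passage between the two realization functors $\langle\mathcal{C}^*(-)\rangle$ and $\mathrm{MC}_\bullet(-)$ while keeping track of basepoints. This is exactly where the mildness and finiteness hypotheses enter: one needs the twisted brackets $\ell_k^z$ to be given by finite sums, the nerve to have the homotopy type of a nilpotent rational space, and the two models to be compatibly based. If one prefers to avoid the nerve, the alternative is to realize the translation directly as a based automorphism of the completed cochain algebra $\widehat{\mathcal{C}^*(L)}$ induced by the augmentation attached to $z$; verifying that this automorphism intertwines the differential of $\mathcal{C}^*(L)$ with that of $\mathcal{C}^*(L^z)$ is the technical heart, where all the Maurer--Cartan bookkeeping is concentrated. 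The truncation step, by contrast, is essentially formal once the homotopy-group formula is in hand.
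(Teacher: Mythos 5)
Your proposal is correct, but note that the paper itself offers no argument for this proposition: it is quoted with a \qed and deferred entirely to the cited references \cite{Ber, BS97, BM}, so the only meaningful comparison is with those sources. Your two-step factorization --- (1) the simplicial bijection $\omega\mapsto\omega-z$ identifying $\mathrm{MC}(L\otimes A_{PL}(\Delta^\bullet))$ with $\mathrm{MC}(L^z\otimes A_{PL}(\Delta^\bullet))$ and moving the basepoint from $z$ to $0$, and (2) the inclusion $L^{(z)}=(L^z)^{(0)}\hookrightarrow L^z$ inducing isomorphisms on $H_m$ for $m\ge 0$, hence on all homotopy groups via $\pi_n(\langle\mathcal{C}^*(-)\rangle,0)\cong H_{n-1}(-)$ --- is essentially the argument of Berglund (building on Getzler), and your verifications (that $L^{(z)}$ is a sub-$L_\infty$-algebra, that $H_0$ survives truncation because cycles and boundaries in degree $0$ are unchanged, that $(L^{(z)})_{-1}=0$ forces connectedness) are the right ones. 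Two small simplifications: under the paper's standing mildness/finite-type hypotheses, $\langle\mathcal{C}^*(L)\rangle$ and $\mathrm{MC}(L\mathbin{\widehat\otimes} A_{PL}(\Delta^\bullet))$ are not merely homotopy equivalent but isomorphic simplicial sets by the Hom-tensor adjunction applied to the free algebra $\Lambda(sL)^{\sharp}$, so the ``main obstacle'' you flag in the twisting step is lighter than you fear; and since you produce isomorphisms on homotopy groups (not just homology), ordinary Whitehead for connected CW complexes suffices --- no appeal to nilpotence or rationality is needed there. The place where the real technical weight sits is the one you cannot avoid citing: the identification $\pi_n(\langle\mathcal{C}^*(M)\rangle,0)\cong H_{n-1}(M)$, natural in $M$, which is itself a theorem of \cite{Ber}; given that the proposition under review is explicitly attributed to those same papers, invoking it is legitimate.
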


Note that this generalizes the notion of differential graded Lie model of a
finite type nilpotent space $Z$, since in this case $\mathcal{C}^*$ coincides
with the classical cochain functor and the only Maurer--Cartan element in $L$
is the zero element.

Similarly, we say that an \emph{$A_{\infty}$-coalgebra model} of $Z$ is a
cocommutative $A_{\infty}$\nobreakdash-coal\-ge\-bra $C$ such that
$\mathcal{L}(C)$ is a differential graded Lie model of $Z$, where
$\mathcal{L}$ denotes the \emph{generalized Quillen functor}; see~\cite{BM}
for further details. This functor assigns to a cocommutative
$A_{\infty}$-coalgebra $C$ an induced differential graded Lie algebra
structure on ${\mathbb{L}}(s^{-1}C)$ whose differential $\partial=\sum_{k\ge
1}\partial_k$ with $\partial_k\colon s^{-1}C\to \mathbb{L}^k(s^{-1}C)$ is
determined by $\Delta_k$ in the same way as the classical Quillen functor
assigns a differential $\partial=\partial_1+\partial_2$ on
$\mathbb{L}(s^{-1}C)$; see, e.g., \cite[IV.22]{FHT}. In fact, if $C$ is a
cocommutative differential graded coalgebra viewed as a cocommutative
$A_{\infty}$\nobreakdash-coalgebra, then $\mathcal{L}$ coincides with the
classical Quillen functor.

In the rest of the paper $X$ will always denote a \emph{nilpotent finite
CW-complex} and $Y$ will always denote a \emph{rational finite type
CW-complex}, although most of the results can be stated if we remove the
finiteness assumption on $X$, as in~\cite{BFM, BFM11}.

We recall briefly the Haefliger model \cite{Hae82} of the mapping space via
the functorial description of Brown\nobreakdash--Szczarba \cite{BS97}. Let
$B$ be a finite dimensional differential graded algebra model of $X$ and let
$(\Lambda V, d)$ be a Sullivan (non-necessarily minimal) model of~$Y$. We
denote by $B^\sharp$ the differential coalgebra dual of $B$ with the grading
$({B^\sharp})^{-n}=B^\sharp_{n}=\Hom(B^n,\mathbb{Q})$ and consider the free
commutative algebra $\Lambda (\Lambda^+ V\otimes {B^\sharp})$ generated by
the $\bz$-graded vector space $\Lambda^+ V\otimes {B^\sharp}$, endowed with
the differential $d$ induced by the ones on $(\Lambda V,d)$ and on
$({B^\sharp}, \delta)$. Let $J$ be the differential ideal generated by
$1\otimes 1-1$, and the elements of the form
$$
v_1v_2\otimes \beta -\sum_j(-1)^{|v_2||\beta_j'|}(v_1\otimes
\beta_j')(v_2\otimes \beta _j''),\quad v_1,v_2\in V,
$$
where
 $\Delta \beta =\sum_j \beta_j'\otimes
\beta_j''$. The inclusion $V\otimes {B^\sharp}\hookrightarrow  \Lambda^+
V\otimes {B^\sharp}$ induces an isomorphism of graded algebras
$$
\rho \colon \Lambda ( V\otimes {B^\sharp})\stackrel{\cong}{\To} \Lambda (\Lambda^+ V\otimes {B^\sharp})/J,
$$
and thus $\widetilde{d}=\rho^{-1}d\rho $ defines a differential in $\Lambda
(V\otimes {B^\sharp})$ and the following holds:

\begin{theorem}[\cite{BS97, Hae82}] The commutative differential graded algebra $(\Lambda
(V\otimes {B^\sharp}), \widetilde{d} )$ is a model of {\em $\map(X,Y)$}, and
the commutative differential graded algebra  $(\Lambda (V\otimes B_+^\sharp
), \widetilde{d} )$ is a model of $\map^* (X,Y)$. $\hfill\qed$ \label{BSH}
\end{theorem}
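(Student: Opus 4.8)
The statement in Theorem~\ref{BSH} is quoted as a known result due to Brown--Szczarba and Haefliger, so the natural task is to reconstruct the argument that the Brown--Szczarba construction $(\Lambda(V\otimes B^\sharp),\widetilde{d})$ computes the rational homotopy type of $\map(X,Y)$. The plan is to exhibit an explicit adjunction/bijection at the level of sets of morphisms and then promote it to a quasi-isomorphism statement via the realization functor. First I would recall that for any commutative differential graded algebra $(A,d_A)$ the simplicial set $\langle A\rangle = \Hom_{\calg}(A, A_{PL}(\Delta^\bullet))$ represents the rational homotopy type; applying this with $A = (\Lambda(V\otimes B^\sharp),\widetilde d)$, the $n$-simplices are $\calg$-maps into $A_{PL}(\Delta^n)$. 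The key point is to identify these with maps out of $\Lambda V$, i.e.\ with the function complex. Using the isomorphism $\rho$ and the fact that $B$ is a finite-dimensional model of $X$, a $\calg$-morphism $\Lambda(V\otimes B^\sharp)\to A_{PL}(\Delta^n)$ should correspond, by dualizing $B^\sharp$ back to $B$ and using $B\simeq A_{PL}(X)$, to a $\calg$-morphism $\Lambda V\to A_{PL}(X)\otimes A_{PL}(\Delta^n)\simeq A_{PL}(X\times\Delta^n)$, which is exactly a simplex in the Sullivan realization of $\map(X,Y)$.

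The central computation is therefore showing that the ideal $J$ and the twisted differential $\widetilde d = \rho^{-1}d\rho$ are precisely what is needed so that this correspondence of morphism sets is natural and compatible with differentials. Concretely, I would verify that a linear map on $V\otimes B^\sharp$ extends to a $\widetilde d$-compatible algebra map if and only if the adjoint datum $\Lambda V \to A_{PL}(X)\otimes A_{PL}(\Delta^\bullet)$ respects the Sullivan differential $d$ on $\Lambda V$; the quadratic relations defining $J$ (the terms $v_1v_2\otimes\beta - \sum_j(-1)^{|v_2||\beta_j'|}(v_1\otimes\beta_j')(v_2\otimes\beta_j'')$) encode exactly the multiplicativity forced by the coproduct $\Delta$ on $B^\sharp$, i.e.\ that the pairing $\langle -,-\rangle$ of $A_{PL}(X)$-valued evaluations turns products in $\Lambda V$ into the correct products of evaluations. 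This is the heart of the Brown--Szczarba functorial reformulation, and it is what makes $\rho$ an isomorphism of \emph{graded} algebras while transporting $d$ to a genuine differential $\widetilde d$.

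Having established the natural bijection on simplices, I would next check it is a simplicial (weak) homotopy equivalence and then invoke the general comparison between $A_{PL}$ and the mapping-space functor. The argument proceeds by noting that Haefliger's original construction produces a model of $\map(X,Y)$ and that $\rho$ identifies the Brown--Szczarba algebra with Haefliger's up to the quotient by $J$; so it suffices to match the two differentials, which the formula $\widetilde d = \rho^{-1}d\rho$ does by construction. For the pointed case, restricting $B^\sharp$ to the augmentation kernel $B_+^\sharp$ (dual to the reduced part $\overline B$) corresponds to fixing the basepoint, i.e.\ to forcing the constant map on the basepoint of $X$, and the identical argument with $V\otimes B_+^\sharp$ in place of $V\otimes B^\sharp$ yields the model of $\map^*(X,Y)$.

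The main obstacle I expect is the sign bookkeeping and the verification that $\widetilde d$ is well defined as a derivation on $\Lambda(V\otimes B^\sharp)$, i.e.\ that $d$ genuinely descends through the quotient by the differential ideal $J$ and that $J$ is preserved by $d$. Equivalently, one must check that $\rho$ is compatible with differentials in the precise sense that makes $\rho^{-1}d\rho$ a square-zero derivation; this requires using that $\Delta$ on $B^\sharp$ is coassociative and cocommutative (dual to the associativity and commutativity of $B$) together with the Koszul sign rule on the bidegrees $|v|$ and $|\beta'|$. Once the well-definedness and naturality of the simplex-level bijection are secured, the passage to a homotopy equivalence of realizations is formal, relying on the representability of $\langle-\rangle$ and the standard fact that $A_{PL}$ sends finite products to tensor products up to quasi-isomorphism.
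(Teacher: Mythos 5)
First, a remark on the comparison itself: the paper gives no proof of Theorem~\ref{BSH} at all --- it is quoted from \cite{BS97, Hae82} and stated with a closing qed symbol --- so your reconstruction can only be measured against the actual Brown--Szczarba argument, whose strategy (representability of the functor $A\mapsto \Hom_{\rm CDGA}(\Lambda V, B\otimes A)$ by the algebra $\Lambda(\Lambda V\otimes B^{\sharp})/J$, then realization) is indeed the one you outline. Your description of the role of $J$ (encoding multiplicativity dual to the coproduct on $B^{\sharp}$), of the well-definedness of $\widetilde d=\rho^{-1}d\rho$, and of the pointed case via $B_{+}^{\sharp}$ are all consistent with that argument.

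However, there is a genuine gap at the step you declare formal. The exact bijection that the construction provides is
$\Hom_{\rm CDGA}\bigl(\Lambda(\Lambda V\otimes B^{\sharp})/J,\, A\bigr)\cong \Hom_{\rm CDGA}\bigl(\Lambda V,\, B\otimes A\bigr)$,
valid because $B$ is finite dimensional; taking $A=A_{PL}(\Delta^{n})$ identifies the $n$-simplices of $\langle(\Lambda(V\otimes B^{\sharp}),\widetilde d)\rangle$ with CDGA maps $\Lambda V\to B\otimes A_{PL}(\Delta^{n})$ --- with $B$, not with $A_{PL}(X)$. Your phrase ``using $B\simeq A_{PL}(X)$'' silently upgrades this to a bijection with maps $\Lambda V\to A_{PL}(X\times\Delta^{n})$, but $B$ is only linked to $A_{PL}(X)$ by a zig-zag of quasi-isomorphisms, and $A_{PL}(X)\otimes A_{PL}(\Delta^{n})\to A_{PL}(X\times\Delta^{n})$ is a quasi-isomorphism, not an isomorphism; neither induces a bijection of hom-sets out of $\Lambda V$. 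So there is no simplex-level bijection with the mapping space, only a chain of simplicial maps, and the entire content of the theorem is that these maps are weak equivalences. Proving this homotopy invariance of the construction in both variables (quasi-isomorphisms of finite-dimensional $B$'s, and of Sullivan algebras $\Lambda V$, induce weak equivalences of the resulting simplicial sets) is the central technical theorem of \cite{BS97}; it is a filtration/spectral-sequence argument and is precisely where the hypotheses that $X$ is a finite nilpotent complex and $Y$ is nilpotent of finite type are used. ``$A_{PL}$ sends products to tensor products up to quasi-isomorphism'' cannot substitute for it, because ``up to quasi-isomorphism'' does not pass through hom-set bijections. Until that invariance statement is formulated and proved (or explicitly invoked), your outline does not yield the theorem.
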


 Now write $B^\sharp=A\oplus \delta A\oplus H$, where $H\cong
 H(B^\sharp)$,
with basis  $\{a_j\}$, $\{b_j\}$ and $\{h_s \}$. Thus $\delta a_j=b_j$ and
$\delta h_s=0$. Additionally, since $(\Lambda V,d)$ is a Sullivan algebra, we
can choose a basis $\{v_i \}$ for $V$ for which $dv_i\in \Lambda V_{<i}$.
Then we have:

\begin{lemma}[\cite{BS97, Bui09}] The commutative differential graded algebra
$(\Lambda (V\otimes B^\sharp), \widetilde{d})$ splits as $(\Lambda
W,\widetilde{d})\otimes \Lambda (U\oplus\widetilde{d}U)$, where
\begin{itemize}
\item[{\rm (i)}] $U$ is generated by $u_{ij}=v_i\otimes a_j$;
\item[{\rm (ii)}] $W$ is generated by $w_{is}=v_i\otimes h_s-x_{is}$, for
    suitable $x_{is}\in \Lambda (V_{<i}\otimes B^\sharp)$;
\item[{\rm (iii)}] $\widetilde{d}w_{is} \in \Lambda \{ w_{ms}\}_{m<i}$;
\item[{\rm (iv)}] if $\widetilde{d}(v_i\otimes h_s)$ is decomposable, so
    is $\widetilde{d}w_{is}$. \label{splitting_lemma}
\end{itemize}
\end{lemma}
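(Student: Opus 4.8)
The plan is to establish the splitting by exhibiting an explicit change of generators in $\Lambda(V\otimes B^\sharp)$ that untangles the contractible part coming from $A\oplus\delta A\subset B^\sharp$ from the essential part coming from $H\cong H(B^\sharp)$. Since $B^\sharp=A\oplus\delta A\oplus H$ as a complex and $(\Lambda V,d)$ is a Sullivan algebra with a basis $\{v_i\}$ satisfying $dv_i\in\Lambda V_{<i}$, the tensor product $V\otimes B^\sharp$ inherits a bigrading (by the index $i$ of $v_i$ and by the splitting summand of $B^\sharp$) which I would use throughout as the bookkeeping device for an induction on $i$.

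First I would treat the generators of type $U$. Setting $u_{ij}=v_i\otimes a_j$, I would show that together with their differentials $\widetilde{d}u_{ij}$ these generate a free acyclic factor $\Lambda(U\oplus\widetilde{d}U)$. The key computation is that modulo terms already controlled at lower filtration, $\widetilde{d}(v_i\otimes a_j)$ has a leading term $\pm\,v_i\otimes\delta a_j=\pm\,v_i\otimes b_j$ coming from the coalgebra differential $\delta$ on $B^\sharp$, plus correction terms produced by the Brown--Szczarba relations encoded in the ideal $J$; these corrections lie in $\Lambda(V_{<i}\otimes B^\sharp)$ because of the Sullivan condition $dv_i\in\Lambda V_{<i}$. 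This identifies $\Lambda(U\oplus\widetilde{d}U)$ as a Koszul-type contractible complex and lets me split it off.

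Next I would define the generators $w_{is}$ of type $W$. The naive candidate $v_i\otimes\elem_s$ need not have its differential inside the span of the other $\elem$-type generators, so I would correct it by subtracting an element $x_{is}\in\Lambda(V_{<i}\otimes B^\sharp)$, constructed recursively in $i$, that absorbs precisely the $A\oplus\delta A$-components of $\widetilde{d}(v_i\otimes\elem_s)$. This is where properties (iii) and (iv) must be arranged simultaneously: I would choose $x_{is}$ so that the remaining differential $\widetilde{d}w_{is}$ lands in $\Lambda\{w_{ms}\}_{m<i}$, and so that no spurious linear term is introduced when $\widetilde{d}(v_i\otimes\elem_s)$ was already decomposable, giving (iv). The recursion closes because each correction only involves strictly lower index $i$, and the Sullivan filtration guarantees the process terminates.

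The main obstacle I expect is verifying that this change of generators is in fact an isomorphism of commutative differential graded algebras onto the claimed tensor decomposition, i.e.\ that the new generators $\{u_{ij},\widetilde{d}u_{ij},w_{is}\}$ are algebraically independent and compatible with $\widetilde{d}$. The delicate point is controlling the Brown--Szczarba relations: because $\widetilde{d}=\rho^{-1}d\rho$ is defined only after passing through the isomorphism $\rho$ with the quotient by $J$, one must check that the corrections $x_{is}$ and the leading-term analysis survive transport across $\rho$ without reintroducing coupling between the $U$- and $W$-factors. Once the triangularity of $\widetilde{d}$ with respect to the index $i$ is established, independence follows from a dimension/filtration count, and the decomposition $(\Lambda W,\widetilde{d})\otimes\Lambda(U\oplus\widetilde{d}U)$ is then immediate.
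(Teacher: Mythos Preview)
The paper does not give its own proof of this lemma: it is stated with attribution to \cite{BS97, Bui09} and closed with a $\qed$ box, so there is nothing to compare against in the paper itself. Your sketch is essentially the standard argument from those references --- split off the acyclic factor generated by the $v_i\otimes a_j$ using the leading term $v_i\otimes b_j$ of $\widetilde d(v_i\otimes a_j)$, then inductively correct $v_i\otimes h_s$ along the Sullivan filtration so that the new differential stays in $\Lambda W$ --- and the outline is sound.

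One small point worth tightening if you write this out in full: the leading-term analysis for $\widetilde d u_{ij}$ is what shows that $\{u_{ij},\,\widetilde d u_{ij},\,w_{is}\}$ is a triangular change of basis from $\{v_i\otimes a_j,\,v_i\otimes b_j,\,v_i\otimes h_s\}$, and this is exactly what gives algebraic independence; you flag this as ``the main obstacle'' but it is in fact immediate once the filtration is set up correctly, so you should not oversell the difficulty there. The genuinely inductive content is only in the construction of the corrections $x_{is}$ for property~(iii), and there the argument is the usual Sullivan-algebra one: at stage $i$ the unwanted part of $\widetilde d(v_i\otimes h_s)$ lies in the ideal generated by $U\oplus\widetilde d U$ inside $\Lambda(V_{<i}\otimes B^\sharp)$, and since that factor is acyclic one can kill it by subtracting an $x_{is}$ chosen as a preimage under $\widetilde d$ (taking care, for~(iv), to choose $x_{is}$ decomposable when possible).
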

\begin{proof}
We proceed by induction on $i$. Suppose that $w_{ms}$ has been defined for
$m<i$ satisfying the lemma for $(\Lambda (V_{<i}\otimes B^\sharp),
\widetilde{d})$. Now, since $\widetilde{d}(v_i\otimes
h_s)=\rho^{-1}[dv_i\otimes h_s]$ belongs to $\Lambda (V_{<i}\otimes
B^\sharp)$, and
$$\Lambda (V_{<i}\otimes B^\sharp)=\Lambda \{ w_{ms}\}_{m<i}\otimes \Lambda\{ u_{mj}, \widetilde{d}u_{mj}\}_{m<i},$$
we can write $\widetilde{d}(v_i\otimes h_s)=\Gamma_1+\Gamma_2$, where
$$\Gamma_1\in \Lambda \{ w_{ms}\}_{m<i}\ \text{and}\ \Gamma_2\in \Lambda \{ w_{ms}\}_{m<i}\otimes \Lambda^+\{ u_{mj}, \widetilde{d}u_{mj}\}_{m<i}.$$
The ideal $\Lambda \{ w_{ms}\}_{m<i}\otimes \Lambda^+\{ u_{mj},
\widetilde{d}u_{mj}\}_{m<i}$ is acyclic, since by inductive hypothesis
$\Lambda \{ w_{ms}\}_{m<i}$ is $\widetilde{d}$-stable. Therefore, $\Gamma_2$
is a boundary, i.e., $\Gamma_2=\widetilde{d}x_{is}$ for some $x_{is}\in
\Lambda \{ w_{ms}\}_{m<i}\otimes \Lambda^+\{ u_{mj},
\widetilde{d}u_{mj}\}_{m<i}$. We define $w_{is}=v_i\otimes h_s-x_{is}$, which
clearly satisfies (ii), (iii) and (iv). To finish, observe that
$\widetilde{d}u_{ij}=\widetilde{d}(v_i\otimes a_j)=\pm v_i\otimes
b_j+\rho^{-1}[(dv_i)\otimes a_j]$, where $\rho^{-1}[(dv_i)\otimes a_j]\in
\Lambda (V_{<i}\otimes B^\sharp )$. Hence,
$$
\Lambda (V_{\leq i}\otimes B^\sharp)=\Lambda \{ w_{ms}\}_{m\leq i}\otimes \Lambda\{ u_{mj}, \widetilde{d}u_{mj}\}_{m\leq i},$$
and this completes the proof.
\end{proof}
We can endow the free algebra $\Lambda(V\otimes H)$ with a differential
$\widehat d$ so that the map
$$
\sigma \colon (\Lambda(V\otimes H),\widehat
d)\stackrel{\cong}{\longrightarrow}(\Lambda W,\widetilde d)$$ is
an isomorphism of differential graded algebras, and therefore
$(\Lambda(V\otimes H),\widehat d)$ is a model of $\map(X,Y)$ called the
\emph{reduced Brown--Szczarba model}. The differential $\widehat{d}$ of this
model can be easily described. The case  $(\Lambda V, d=d_1+d_2)$ is proved
in \cite[Lemma~2.8]{Bui09} and the same proof also works in the general case.

Consider the composition $\theta =\sigma^{-1}p\colon \Lambda (V\otimes
B^\sharp )=\Lambda W\otimes \Lambda (U\oplus \widetilde{d}U)\to \Lambda
(V\otimes H)$, where $p\colon \Lambda W\otimes \Lambda (U\oplus
\widetilde{d}U)\stackrel{\simeq }{\to }\Lambda W$ is the projection.
\begin{lemma}[\cite{Bui09}] The morphism $\theta $ operates on the generators as follows:
$$\theta (v\otimes b)=\left\{
\begin{array}{cl}
v\otimes b & \mbox{if $b\in H$}, \\
0& \mbox{if $b\in A$}, \\
(-1)^{|v|+1}\sum_k\sum_{i,j}\varepsilon \theta (v_i^{(1)}\otimes z_j^{(1)})\cdots \theta (v_i^{(k)}\otimes z_j^{(k)}) & \mbox{if $b\in \delta A$},
\end{array}
\right.$$
where in the last case $b=\delta a$, $\Delta^{(k-1)}a=\sum_j z_j^{(1)}\otimes
\cdots \otimes z_j^{(k)}$, $d_kv=\sum_i v_i^{(1)}\cdots v_i^{(k)}$, and
$\varepsilon$ is the sign given by Koszul convention. $\hfill\qed$
 \label{Theta_lemma}
\end{lemma}
\begin{lemma}[\cite{Bui09}]
The differential in the free commutative differential graded algebra
$(\Lambda (V\otimes H), \widehat{d})$ is given by the formula:
\begin{equation}\label{Hat differential}
\widehat{d}(v\otimes h)=d_1v\otimes h+\sum_k\sum_{i,j}\varepsilon \theta (v_i^{(1)}\otimes z_j^{(1)})\cdots \theta (v_i^{(k)}\otimes z_j^{(k)}),
\end{equation}
where $\Delta^{(k-1)}h=\sum_j z_j^{(1)}\otimes \cdots \otimes z_j^{(k)}$ and
$d_kv=\sum_i v_i^{(1)}\cdots v_i^{(k)},\ k\geq 2$. $\hfill\qed$ \label{Hat
differential lemma}
\end{lemma}

{We can describe a formula for $\widehat{d}_j$ in terms of directed rooted
trees. Let $T\in T_j$ be a directed rooted tree with $j$ leaves. We can
associate to $T$ two different linear maps $T_V\colon V\to V^{\otimes j}$ and
$T_H\colon H\to H^{\otimes j}$ moving up from the root to the leaves. In the
first case we label each internal vertex with $r$ input edges with
$\mathcal{S}\circ d_r$, where $\mathcal{S}\colon \Lambda^rV\to V^{\otimes r}$
is the symmetrization map. In the second case we label the root with $i$,
each internal vertex with $r$ input edges with $\Delta^{(r-1)}$, each
internal edge with $k$ and each leaf with $p$, where $i$, $p$ and $k$ are
described in the homotopy retract~$(\ref{homotopy_retract_01})$, taking
$C=B^\sharp$. Here is an example of the previous labeling defining $T_V$ and
$T_H$ respectively, for a concrete tree: }
$$\xymatrixcolsep{.5pc} \xymatrixrowsep{.5pc}
\entrymodifiers={=<1pc>} \xymatrix{ & \ar@{-}[ddrr]&
& \ar@{-}[dd]      &
&\ar@{-}[ddll]&&\ar@{-}[dddlll]&                           &
&*{^{p}}\ar@{-}[ddrr]&&*{^{p}}\ar@{-}[dd]
&&*{^{p}}\ar@{-}[ddll]&&*{^{p}}\ar@{-}[ddll]  \\
           &                & \ar@{-}[dr]&                       &                    &                  &&                    &          &
&&&&&&\ar@{-}[ddll]&\\
&                &                      & \mathcal{S}\circ d_3\ar@{-}[dr] &                    &                  &&                    &   & &&&\ \ \Delta^{(2)}\ar@{-}[dr]_{k}&&&&\\
           &                &                      &                       &\mathcal{S}\circ d_2\ar@{-}[dd]  &                  &&             &       &   &&&&\ \ \Delta^{(1)}\ar@{-}[dd]&&&\\
           &                &                      &                       & &           &          &&                   &   &&&& &&&
  \\ &&&&&&&&&&&&&{_{\stackrel{}{i}}}&&& \\
 }
$$
Now we define a linear map
\begin{equation}\label{Theta}
\Theta_j\colon V^{\otimes j}\times H^{\otimes j}\to \Lambda^j (V\otimes H),
 \end{equation}
 by $\Theta _j(v_1\otimes \cdots \otimes v_j, h_1\otimes \cdots \otimes h_j)=\varepsilon (v_1\otimes h_1)\cdots (v_j\otimes h_j)$, where $\varepsilon$ is the sign given by the Koszul convention.

Using the formula described in Lemma~\ref{Hat differential lemma}, we can
check the following:
\begin{lemma}\label{jth part of d}
The $j$th part of the differential in the free commutative differential
graded algebra $(\Lambda (V\otimes H), \widehat{d})$ is given by the formula:
\begin{equation}
\pushQED{\qed}
\widehat{d}_j(v\otimes h)=\sum_{T\in T_j}\frac{1}{|\Aut(T)|}\Theta_j (T_V(v), T_H(h)). \qedhere
\popQED
\end{equation}
\end{lemma}

If $C$ is a coalgebra model of $X$ and $L$ is an $L_{\infty}$-model of $Y$,
then we can endow the complex $\Hom(C,L)$ with an $L_{\infty}$-algebra
structure modeling  the space of continuous functions from $X$ to $Y$. More
concretely,
\begin{theorem}[\cite{Ber, BFM}] The complex of linear maps $\Hom(C,L)$ with brackets
$$\begin{array}{l} \ell_1(f)=\ell_1\circ f+(-1)^{|f|+1}f\circ \delta, \\
\ell_k(f_1,\dots ,f_k)=[-,\stackrel{(k)}{\ldots},-]_L\circ (f_1\otimes \cdots
\otimes f_k)\circ \Delta^{(k-1)},\ k\geq 2, \end{array}$$
is an $L_{\infty}$-algebra modeling $\map (X,Y)$. $\hfill\qed$
\label{thm:explicit_l}
\end{theorem}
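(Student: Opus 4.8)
The plan is to prove the two assertions in turn: that the displayed formulae define an $L_\infty$-algebra structure on $\Hom(C,L)$, and that this $L_\infty$-algebra is an $L_\infty$-model for $\map(X,Y)$. For the first assertion I would exhibit $\Hom(C,L)$ as a convolution $L_\infty$-algebra. Since $X$ is a finite nilpotent CW-complex, we may take $C=B^\sharp$ for a finite-dimensional commutative differential graded algebra model $B$ of $X$, so that $\delta$ and $\Delta$ are dual to the differential and the product of $B$. Finite-dimensionality of $B$ gives a natural isomorphism of graded vector spaces $\Hom(C,L)=\Hom(B^\sharp,L)\cong B\otimes L$, under which $f=b\otimes x$ sends $\beta\in C$ to $\langle \beta,b\rangle\, x$. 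Because $\Delta^{(k-1)}$ is dual to the $k$-fold product of $B$, a direct computation shows that under this isomorphism the bracket $\ell_k(f_1,\dots,f_k)=\ell_k^L\circ(f_1\otimes\cdots\otimes f_k)\circ\Delta^{(k-1)}$ becomes $(b_1\cdots b_k)\otimes\ell_k^L(x_1,\dots,x_k)$ up to the Koszul sign, and $\ell_1$ becomes the tensor-product differential. The (skew)symmetry condition~(i) then follows from the cocommutativity of $\Delta$ together with the symmetry of the $\ell_k^L$, while the generalized Jacobi identity~(ii) reduces, after reorganizing the shuffle sums by means of the coassociativity and cocommutativity of $\Delta$, to the generalized Jacobi identity for $L$. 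In short, this is the standard fact that the tensor product of a commutative differential graded algebra with an $L_\infty$-algebra is again an $L_\infty$-algebra.

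For the second assertion I would compute the cochain algebra $\mathcal{C}^*(\Hom(C,L))$ and compare it with the reduced Brown--Szczarba model, which is the robust route since $\mathcal{C}^*$ need not preserve quasi-isomorphisms. Writing $\mathcal{C}^*(L)=(\Lambda V,d)$ for the Sullivan model of $Y$, we have $sL\cong V^\sharp$, and combining the isomorphism $\Hom(C,L)\cong B\otimes L$ with the definition $\mathcal{C}^*(M)=\Lambda((sM)^\sharp)$ produces, after the suspensions and dualizations cancel, an isomorphism of graded commutative algebras
$$
\mathcal{C}^*(\Hom(C,L))\cong\Lambda(V\otimes B^\sharp).
$$
It remains to match the differentials. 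By the pairing~\eqref{pairing}, the component $d_j$ of the induced differential landing in $\Lambda^j$ is dual to the bracket $\ell_j$; since $\ell_j$ is $\ell_j^L$ (dual to the $\Lambda^j$-component of $d$ on $\Lambda V$) precomposed with $\Delta^{(j-1)}$, the induced differential sends a generator $v\otimes h$ to $\sum\sum_j\epsilon\,(v_1\otimes c_j^1)\cdots(v_n\otimes c_j^n)$ with $dv=\sum v_1\cdots v_n$ and $\Delta^{(n-1)}h=\sum_j c_j^1\otimes\cdots\otimes c_j^n$. This is precisely the Brown--Szczarba differential $\widetilde d$ recalled before Lemma~\ref{splitting_lemma}, so $\mathcal{C}^*(\Hom(C,L))\cong(\Lambda(V\otimes B^\sharp),\widetilde d)$ as commutative differential graded algebras. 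Theorem~\ref{BSH} then identifies this as a model of $\map(X,Y)$, proving the claim; the pointed statement follows identically upon replacing $B^\sharp$ by $B_+^\sharp$.

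The step I expect to be the main obstacle is the sign and suspension bookkeeping in the second assertion: one must track the Koszul signs through the chain of identifications $\Hom(C,L)\cong B\otimes L$ and $(s(B\otimes L))^\sharp\cong V\otimes B^\sharp$ and through the pairing~\eqref{pairing}, and check that the signs produced by dualizing the convolution brackets coincide on the nose with the Koszul signs appearing in $\widetilde d$. A clean way to organize the verification is to treat the binary bracket $\ell_2$ first, where the computation recovers the classical Haefliger model of~\cite{Hae82}, and then to argue by induction on $j$ that the $d_j$-component is dual to the iterate $\Delta^{(j-1)}$, reducing the whole identity to the single fact that $\Delta$ is dual to the product of $B$.
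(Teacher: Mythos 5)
The paper itself gives no proof of this statement: it is imported from \cite{Ber, BFM} with a \textsc{qed} box, so the only in-paper material to compare against is the proof of the analogous Theorem~\ref{main_thm} for $\Hom(H,L)$. Your proposal is essentially the proof from \cite{BFM}, and it follows the same strategy the authors use for Theorem~\ref{main_thm}: realize the convolution brackets as a base-change $L_\infty$-structure ($\Hom(C,L)\cong B\otimes L$ for $B=C^\sharp$ finite dimensional), then identify $\mathcal{C}^*(\Hom(C,L))$ with the Brown--Szczarba model $(\Lambda(V\otimes B^\sharp),\widetilde d)$ of Theorem~\ref{BSH} through the pairing~\eqref{pairing}, rather than trying to compare $L_\infty$-quasi-isomorphism classes directly (which, as the paper stresses, $\mathcal{C}^*$ does not respect). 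Two remarks on scope rather than correctness. First, your reduction to $C=B^\sharp$ finite dimensional is not a loss in context: the paper's standing hypotheses (finite nilpotent $X$, the Brown--Szczarba setup preceding Lemma~\ref{splitting_lemma}, and the explicit finiteness hypothesis in Theorem~\ref{main_thm}) put you exactly in that setting, and for an infinite-dimensional coalgebra model the dualization step $(s\Hom(C,L))^\sharp\cong V\otimes B^\sharp$ genuinely fails, so some such hypothesis is needed. Second, finite type of $L$ (equivalently of $V=(sL)^\sharp$) is silently used in the same step; this is again part of the paper's standing assumptions on $Y$. The one place where your outline is only a sketch --- verifying that the dualized convolution brackets reproduce the Koszul signs of $\widetilde d$ on the nose --- is indeed where all the work lies, and the paper's proof of Theorem~\ref{main_thm} (the explicit check of $\langle \widehat{d}_j(v\otimes h); sf_1\wedge\cdots\wedge sf_j\rangle=\pm\langle v\otimes h; s\ell'_j(f_1,\dots,f_j)\rangle$ for $j=3$) is a template for how that verification goes; your suggestion to induct on $j$ using coassociativity is a reasonable way to organize it.
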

Similarly, the complex $\homo (\overline{C},L)$, where $\overline{C}=\ker
\varepsilon$ is the kernel of the augmentation $\varepsilon \colon C\to
\mathbb{Q}$, with the same brackets replacing $\Delta $ with
$\overline{\Delta}$, is an $L_{\infty}$-model for $\map^*(X,Y)$, the space of
based functions.

\section{An explicit $L_\infty$-structure on ${\rm Hom}(H_*(X,\mathbb{Q}),\pi_*(\Omega Y)\otimes \mathbb{Q})$}
\label{main_section} In this section, we describe an explicit
$L_{\infty}$-structure on the complex of linear maps $\Hom(H, L)$, where $L$
is an $L_{\infty}$-algebra and $H$ is the homology of a coalgebra $C$ with
the transferred $A_{\infty}$-structure.

If $C$ is a cocommutative differential graded coalgebra and $H\cong H(C)$
denotes the homology of $C$, then the transferred $A_{\infty}$-coalgebra
structure on $H$, whose higher comultiplications are called \emph{higher
Massey coproducts} (cf.\ \cite[10.3.12]{LV}) is described as follows. We can
decompose $(C,\delta ,\Delta)$ as $A\oplus \delta A\oplus H$ with basis
$\{a_j\}$, $\{\delta a_j\}$ and $\{h_s \}$. Thus, $\delta=0$ in $H$ and
$\delta\colon A\to \delta A$ is an isomorphism. This decomposition induces a
homotopy retract
\begin{equation}
\xymatrix{ \ar@(ul,dl)@<-7ex>[]_k  & (C,\delta )
\ar@<0.75ex>[r]^-p & (H, 0) \ar@<0.75ex>[l]^-i }
\label{homotopy_retract_01}
\end{equation}
given by $p(a_j)=p(\delta a_j)=0$, $p(h_s)=h_s$; $i(h_s)=h_s$;
$k(a_j)=k(h_s)=0$ and $k(\delta a_j)=a_j$. Then by Theorem~\ref{htt}(i), we
can transfer the cocommutative differential graded coalgebra structure on $C$
to an $A_{\infty}$-coalgebra structure on $H$. For example, since $C$ has no
higher order coproducts, the operation $\Delta_3'$ on $H$ given by the
formula of Theorem~\ref{htt}(i) is provided by the trees
$$
\xymatrixcolsep{1pc} \xymatrixrowsep{1pc} \entrymodifiers={=<1pc>}
\xymatrix{
 &*{^p}\ar@{-}[dr]&& *{^p}\ar@{-}[dl] & *{}&*{^p}\ar@{-}[ddll]& & *{^p}\ar@{-}[ddrr]&& *{^p}\ar@{-}[dr] & *{}&*{^p}\ar@{-}[dl]  \\
  && \Delta_2\ar@{-}[dr]|k&   &&&& && *{} & \Delta_2\ar@{-}[dl]|k &  \\
&  &*{} & \Delta_2\ar@{-}[d] & *{} && &&*{} & \Delta_2\ar@{-}[d] & *{} & \\
 &&*{} & *{_{\stackrel{}{i}}} & *{}& & &  &*{} & *{_{\stackrel{}{i}}} & *{} &
}
$$
Explicitly,
\begin{align*} \Delta_3'(h)&=(p\otimes p\otimes \text{id}
)\circ (\Delta_2\otimes \text{id} )\circ (k\otimes p)\circ\Delta_2
\circ i(h)\\
&\pm (\text{id}\otimes p\otimes p)\circ (\text{id}\otimes\Delta_2)
\circ (p\otimes k)\circ\Delta_2 \circ i(h)\\
&=\sum_j(p\otimes p\otimes \text{id} )\circ (\Delta_2\otimes
\text{id}
)\circ (k\otimes p)(z_j'\otimes z_j'')\\
&\pm \sum_j(\text{id}\otimes p\otimes p) \circ
(\text{id}\otimes\Delta_2)\circ (p\otimes k)(z_j'\otimes z_j''),
\end{align*}
where $\Delta_2(h)=\sum_j z_j'\otimes z_j''$. For a term of the form
$z'_j\otimes z''_j=\delta a\otimes h'$, the $j$th term in the above summation
is
\begin{align*}
& (p\otimes p\otimes \text{id} )\circ (\Delta_2\otimes \text{id}
)\circ (k(\delta a)\otimes p(h'))\\
&\pm (\text{id}\otimes p\otimes p)( \text{id}\otimes\Delta_2
)(p(\delta a)\otimes k(h'))\\
&=(p\otimes p\otimes \text{id} )\circ (\Delta_2\otimes \text{id}
)\circ (a\otimes h')\\
&=\sum_i(p\otimes p\otimes \text{id} )\circ (x_i'\otimes
x_i''\otimes h')\\
&=\sum_ip(x_i')\otimes p(x_i'')\otimes h',
\end{align*}
where $\Delta_2(a)=\sum_i x_i'\otimes x_i''$.

Replacing $C$ by the kernel of the augmentation $\overline{C}$ and using the
decomposition $\overline{C}=A\oplus \delta A\oplus\overline{H}$, we can
proceed similarly as above to obtain a transferred
$A_{\infty}$\nobreakdash-coalgebra structure on $\overline{H}$. The following
result relates the Quillen minimal model of a differential graded Lie algebra
with the above higher Massey coproducts:
\begin{theorem}
The transferred $A_{\infty}$-coalgebra structure on $\overline{H}$ is
cocommutative and $\mathcal{L}(\overline{H})$ is the Quillen minimal model of
$\mathcal{L}(\overline{C})$. \label{modelo_minimal}
\end{theorem}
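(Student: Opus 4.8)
The plan is to establish two things: first, that the transferred $A_{\infty}$-coalgebra structure $\{\Delta'_k\}$ on $\overline{H}$ is cocommutative, which is needed merely so that $\mathcal{L}(\overline{H})$ is defined at all; and second, that the resulting differential graded Lie algebra $\mathcal{L}(\overline{H})$ is minimal and quasi-isomorphic to $\mathcal{L}(\overline{C})$, which is precisely the assertion that it is a Quillen minimal model.

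For cocommutativity I would start from the fact that $\overline{C}$ is a strict cocommutative coalgebra, so its only nonzero structure maps are $\Delta_1=\delta$ and the graded symmetric coproduct $\Delta_2=\Delta$, which satisfies $\tau\circ\Delta_2=0$. Consequently, in the transfer formula $\Delta'_k=\sum_{T\in PT_k}\Delta_T$ only binary planar trees contribute. The key observation is that the homotopy retract \eqref{homotopy_retract_01} satisfies the side conditions $k^2=0$, $pk=0$ and $ki=0$, as one checks immediately from $k(a_j)=k(h_s)=0$ and $k(\delta a_j)=a_j$. Under these side conditions the $A_{\infty}$-transfer of a cocommutative coproduct is again cocommutative: this is the coalgebra version of the statement that homotopy transfer for the operad $\com$ produces a $C_{\infty}$-structure, the binary symmetry of $\Delta_2$ at every internal vertex matching, after summing over all planar embeddings of each tree, the shuffle symmetry detected by $\tau$. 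Concretely, I would verify $\tau\circ\Delta'_k=0$ by pairing each planar tree with the trees obtained by transposing the two inputs at an internal vertex and tracking the Koszul signs. This sign-and-tree bookkeeping is the step I expect to be the main obstacle.

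Having established cocommutativity, $\mathcal{L}(\overline{H})=(\mathbb{L}(s^{-1}\overline{H}),\partial)$ is well defined, with $\partial=\sum_{k\ge 1}\partial_k$ and $\partial_k$ determined by $\Delta'_k$. Since the differential on $H$ is zero we have $\Delta'_1=d_H=0$, and hence $\partial_1=0$; that is, $\partial$ is decomposable and $\mathcal{L}(\overline{H})$ is a minimal free differential graded Lie algebra, generated by $s^{-1}\overline{H}\cong s^{-1}\widetilde{H}_*(X;\mathbb{Q})$, which is exactly the generating space of the Quillen minimal model of $X$.

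It remains to produce a quasi-isomorphism $\mathcal{L}(\overline{H})\to\mathcal{L}(\overline{C})$. For this I would apply the generalized Quillen functor to the $A_{\infty}$-quasi-isomorphism $i\colon(\overline{H},\{\Delta'_k\})\to(\overline{C},\{\Delta_k\})$ furnished by Theorem~\ref{htt}(i), obtaining a morphism of differential graded Lie algebras $\mathcal{L}(i)\colon\mathcal{L}(\overline{H})\to\mathcal{L}(\overline{C})$ whose linear part is $s^{-1}i^{(1)}s$. Because $i^{(1)}=i$ is a quasi-isomorphism of the underlying complexes $(\overline{H},0)\to(\overline{C},\delta)$, a standard filtration argument by word length in the free Lie algebras (equivalently, the fact that the cobar-type functor $\mathcal{L}$ sends $A_{\infty}$-quasi-isomorphisms of conilpotent coalgebras to quasi-isomorphisms) shows that $\mathcal{L}(i)$ induces an isomorphism in homology. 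Combined with the minimality established above, this identifies $\mathcal{L}(\overline{H})$ with the Quillen minimal model of $\mathcal{L}(\overline{C})$, completing the proof.
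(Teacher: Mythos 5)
Your proposal is correct and takes essentially the same approach as the paper: cocommutativity of the transferred structure comes from the general fact about transferring homotopy commutative structures (the paper cites \cite[Theorem 12]{ChG}, which is precisely the $C_\infty$-type transfer result you invoke), and applying the generalized Quillen functor to the homotopy retract yields the quasi-isomorphism $\mathcal{L}(\overline{H})\stackrel{\sim}{\longrightarrow}\mathcal{L}(\overline{C})$, after which minimality ($\partial_1=0$) identifies it as the Quillen minimal model (the paper delegates this last identification to \cite[Theorem 2.1]{Bui09} and \cite[Theorem 22.13]{FHT}). The only substantive difference is presentational: the paper also records an explicit recursive formula for the transferred differential, needed later in its applications, while you sketch by hand the verifications the paper handles by citation.
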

\begin{proof}
Since the kernel of the augmentation $\overline{C}$ is cocommutative, the
transferred $A_{\infty}$-coalgebra structure on $\overline{H}$ is also
cocommutative (cf.\ \cite[Theorem 12]{ChG}). Then we can apply the Quillen
functor to diagram~(\ref{homotopy_retract_01}) and we get a
quasi-isomor\-phism
$(\mathbb{L}(s^{-1}\overline{H}),\partial)\stackrel{\sim}{\rightarrow}(\mathcal{L}(\overline{C}),\partial)$.
The $k$th part $\partial_k$ of the differential in
$(\mathbb{L}(s^{-1}\overline{H}),\partial)$ is determined by the higher
Massey coproduct $\overline{\Delta}'_k$ in $\overline{H}$.

Moreover, by the computations of $\overline{\Delta}'_k$ made above, if we
start by decomposing $(\overline{C},\delta,\overline{\Delta})$ as
$A\oplus\delta A\oplus \overline{H}$, where $\overline{H}\cong
H(\overline{C})$ with basis $\{a_j\}$, $\{\delta a_j\}$ and $\{h_s\}$, then
we can easily check that the differential on $\mathbb{L}(s^{-1}\overline{H})$
is described by
\begin{equation}
\partial s^{-1}h=\frac{1}{2}\sum_j(-1)^{|z_j'|}[\lambda(z_j'),
\lambda(z_j'')],
\label{req_for}
\end{equation}
where $\overline{\Delta}h=\sum_j z_j'\otimes z_j''$ and
$$
\lambda(h)= s^{-1}h,\quad \lambda(a)=0,\quad \lambda(\delta
a)=\frac{1}{2}\sum_i(-1)^{|x_i'|}[\lambda(x_i'),\lambda(x_i'')],
$$
where $\overline{\Delta}a=\sum_i x_i'\otimes x_i''$.

Finally, by~\cite[Theorem~2.1]{Bui09} or \cite[Theorem~22.13]{FHT},
$(\mathbb{L}(s^{-1}\overline{H}),\partial)$ agrees with the Quillen minimal
model of $(\mathcal{L}(\overline{C}),\partial)$.
\end{proof}
 The following is the main result of this article:
\begin{theorem}\label{main_thm}
Let $C$ be a finite dimensional cocommutative differential graded coalgebra
model of a finite nilpotent CW-complex $X$, and let $L$ be an
$L_{\infty}$-model of a rational CW-complex of finite type $Y$.
\begin{itemize}
\item[{\rm (i)}] There is an explicit $L_\infty$-structure in ${\rm
    Hom}(H,L)$, where $H$ denotes the homology of~$C$.
\item[{\rm (ii)}] With the above structure $\mathcal{C}^*(\Hom(H,L))\cong
    (\Lambda (V\otimes H), \widehat{d})$, the reduced Brown--Szczarba
    model of~$\map(X, Y)$, where $V=(sL)^{\sharp}$. Hence $\Hom(H, L)$ is
    an $L_\infty$-model for the mapping space.
\end{itemize}
If we replace $H$ by the homology $\overline{H}$ of the kernel of the
augmentation of~$C$, then we get the same results for the pointed mapping
space $\map^*(X,Y)$.
\end{theorem}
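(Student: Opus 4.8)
The plan is to deduce~(i) by lifting the homotopy retract~(\ref{homotopy_retract_01}) to the level of $\Hom$-complexes and applying the homotopy transfer theorem, and then to prove~(ii) by checking, generator by generator, that $\mathcal{C}^*$ of the transferred $L_\infty$-algebra is the reduced Brown--Szczarba model. For~(i), I would first define
\[
P\colon \Hom(C,L)\to\Hom(H,L),\ P(f)=f\circ i,\qquad I\colon \Hom(H,L)\to\Hom(C,L),\ I(g)=g\circ p,
\]
together with the homotopy $K(f)=\pm f\circ k$. Using the relation ${\rm id}_C-ip=\delta k+k\delta$ from~(\ref{homotopy_retract_01}) and the formula for $\ell_1$ in Theorem~\ref{thm:explicit_l}, a direct computation gives ${\rm id}-IP=\ell_1K+K\ell_1$; moreover $I$ is a quasi-isomorphism because $p$ is one and, over $\cu$ with $C$ (hence $H$) finite dimensional, $\Hom(-,L)$ preserves quasi-isomorphisms. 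Thus $(\Hom(C,L),\Hom(H,L),I,P,K)$ is a homotopy retract, and Theorem~\ref{htt}(ii) transfers the $L_\infty$-structure of Theorem~\ref{thm:explicit_l} to an explicit $L_\infty$-structure on $\Hom(H,L)$, namely $\ell_k=\sum_{T\in T_k}\ell_T/|\Aut(T)|$, where in each tree the leaves are labelled by $I$, the internal edges by $K$, the root by $P$, and the vertices by the brackets of $\Hom(C,L)$. Tracing a tree shows the resulting brackets intertwine the higher Massey coproducts of $H$ (coming from $\Delta$ threaded through $i$, $k$, $p$) with the $L_\infty$-brackets of $L$, which settles~(i).

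For the algebra isomorphism in~(ii), I would use that $sL\cong V^{\sharp}$ and that $H$ is finite dimensional to obtain natural identifications
\[
s\Hom(H,L)\cong\Hom(H,sL)\cong\Hom(H,V^{\sharp})\cong(V\otimes H)^{\sharp},
\]
and hence $(s\Hom(H,L))^{\sharp}\cong V\otimes H$. Consequently $\mathcal{C}^*(\Hom(H,L))=\Lambda((s\Hom(H,L))^{\sharp})\cong\Lambda(V\otimes H)$ as graded commutative algebras.

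It then remains to show this isomorphism carries the differential of $\mathcal{C}^*(\Hom(H,L))$ to $\widehat d$. Both differentials are free, hence determined by their values on the generators $v\otimes h$, and their $\Lambda^k$-component is dual, through the pairing~(\ref{pairing}), to the $k$-ary bracket $\ell_k$. I would therefore dualize the tree formula of~(i) and read it against the recursive recipe defining $\widehat d$, using the dictionary: the bracket $\ell_r^L$ at a vertex dualizes to the arity-$r$ part of the Sullivan differential, i.e.\ to a summand of $dv=\sum v_1\cdots v_r$; the coproduct $\Delta^{(r-1)}$ at the same vertex distributes $h$ into $r$ tensor factors; the projection $p$, which on $C=A\oplus\delta A\oplus H$ keeps $H$ and kills $A$ and $\delta A$, reproduces the rules that a term all of whose factors lie in $H$ survives unchanged while a term with a factor in $A$ disappears; and the homotopy $k$, which sends $\delta a\mapsto a$, reproduces the rule that a factor $v_i\otimes\delta a$ must instead be re-expanded through $dv_i$ and $\Delta^{(n_i-1)}a$. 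The recursion over trees is exactly the iteration in the definition of $\widehat d$, so the two differentials agree on generators.

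This identifies $\mathcal{C}^*(\Hom(H,L))$ with the reduced Brown--Szczarba model $(\Lambda(V\otimes H),\widehat d)$ as commutative differential graded algebras; since the latter is a model of $\map(X,Y)$ by Theorem~\ref{BSH} and Lemma~\ref{splitting_lemma}, $\Hom(H,L)$ is an $L_\infty$-model of $\map(X,Y)$. Replacing $C$, $\Delta$, $H$ by $\overline C$, $\overline\Delta$, $\overline H$ throughout yields the pointed statement verbatim. The hard part will be the bookkeeping in this last comparison: reconciling the coefficients $1/|\Aut(T)|$ and the symmetrization over $S_k$ in Theorem~\ref{htt}(ii) with the plain Koszul coefficients of $\widehat d$, and checking that the signs from~(\ref{pairing}) and from commuting suspensions past tensor factors agree. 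I expect the automorphism factors to be cancelled by the multiplicities from the symmetrization and from dualizing graded-symmetric generators, leaving precisely the $\pm1$ coefficients of the Brown--Szczarba formula.
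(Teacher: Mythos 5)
Your proposal is correct and follows essentially the same route as the paper: lift the retract $(C,H,i,p,k)$ to $(\Hom(C,L),\Hom(H,L),p^*,i^*,k^*)$, transfer the convolution $L_\infty$-structure of Theorem~\ref{thm:explicit_l} via Theorem~\ref{htt}(ii) to get (i), and prove (ii) by verifying the pairing identity $\langle \widehat{d}_j(v\otimes h); sf_1\wedge\cdots\wedge sf_j\rangle=\pm\langle v\otimes h; s\ell'_j(f_1,\dots,f_j)\rangle$ under $(s\Hom(H,L))^{\sharp}\cong V\otimes H$, matching the tree operations ($p$ killing $A\oplus\delta A$, $k$ re-expanding $\delta a$) against the recursive definition of $\widehat d$. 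The sign and $|\Aut(T)|$ bookkeeping you defer is handled in the paper exactly as you predict (symmetrization cancels the automorphism factors), illustrated there only for $j=3$.
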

\begin{proof}
The homotopy retract~(\ref{homotopy_retract_01}) induces a new homotopy
retract
$$ \xymatrix{ \ar@(ul,dl)@<-7ex>[]_{k^*}  & (\text{Hom}(C,L), \delta )
\ar@<0.75ex>[r]^-{i^*} & (\text{Hom}(H,L), \delta ),
\ar@<0.75ex>[l]^-{p^*} }
$$
where both complexes of linear maps have the usual differentials. Since $i$
and $p$ are quasi-isomorphism so are $i^*$ and $p^*$. Observe that
$\text{Hom}(C,L)$ can be endowed with a natural $L_{\infty}$-algebra
structure modeling mapping spaces under suitable conditions~\cite{Ber, BFM}.
Hence we can apply Theorem~\ref{htt}(ii) to obtain an $L_{\infty}$-structure
$\{\ell_k'\}$ on $\Hom(H,L)$. This proves part (i).

In order to prove (ii), it is enough to see that equation~(\ref{pairing})
holds, which in this case amounts to check that
$$
\langle \widehat{d}_j(v\otimes h); sf_1\wedge\cdots\wedge sf_j\rangle=(-1)^{\epsilon}\langle
v\otimes h; s\ell'_j(f_1,\dots, f_j)\rangle.
$$
In what follows we work modulo signs and summations where necessary in order
to simplify the computations. In view of the isomorphism $V\otimes H\cong
(s\Hom(H,L))^{\sharp}$, and Theorem \ref{htt}(ii), the right hand part of the
above equation can be written as:
\begin{align*}
\langle v\otimes h; s\ell'_j(f_1,\dots, f_j)\rangle &= \pm \langle v ; s\ell'_j(f_1,\dots, f_j)(h)\rangle  \\
&= \pm \sum_{T\in T_k} \frac{1}{|\text{Aut}(T)|}\langle v ; s\ell_T (f_1,\dots , f_j)(h)\rangle \\
&=\sum _{T\in T_j}\langle \frac{1}{|\text{Aut}(T)|}\Theta_j (T_V(v),T_H(h)) ;sf_1\wedge \dots \wedge sf_j \rangle ,
\end{align*}
where we have used in the last equality equation~(\ref{pairing}). The result
is now a consequence of Lemma $\ref{jth part of d}$. The same proof works for
the pointed case.
\end{proof}
\begin{example}
In this example, we describe explicitly $\ell'_j$ in the case $j=3$. In what
follows we have omitted the suspensions in order to simplify the notation.
The explicit formula for $\ell_3'$ is provided by the trees
$$
\xymatrixcolsep{1pc} \xymatrixrowsep{1pc} \entrymodifiers={=<1pc>}
\xymatrix{
 &*{^{p^*}}\ar@{-}[dr]& & *{^{p^*}}\ar@{-}[dl] & *{}&*{}& *{^{p^*}}\ar@{-}[ddll]&
 & *{^{p^*}}\ar@{-}[ddrr]&& *{^{p^*}}\ar@{-}[dd] & *{}&*{^{p^*}}\ar@{-}[ddll]  \\
 && \ell_2\ar@{-}[drr]|{k^*}&   && && & && *{} && \\
 &&*{} & *{} & \ell_2\ar@{-}[d] && & &&*{} & \ell_3\ar@{-}[d] & *{} & \\
 &T_1&*{} & *{} & *{_{\stackrel{}{i^*}}}& & & &T_2  &*{} & *{_{\stackrel{}{i^*}}} & *{} &
}
$$
Therefore, if $f_1,f_2$ and $f_3$ are elements of ${\rm Hom}(H,L)$ and $h$ is
an element of $H$, then $\ell_3'(f_1,f_2,f_3)(h)$ is expressed in terms of
the maps
\begin{align*}
\ell_{\widetilde{T}_2}(f_1,f_2,f_3)(h) & = i_*\ell_3(p^*f_1,p^*f_2,p^*f_3)(h) \\ &= [-,-,-]_L\circ (p^*f_1\otimes
p^*f_2\otimes p^*f_3)\circ \Delta^{(2)}(h)\\
&=\sum_j(-1)^{|z_j'|(|f_2|+|f_3|)+|z_j''||f_3|}[f_1p(z_j'),f_2p(z_j''),f_3p(z_j''')]_L,
\end{align*}
where $\Delta^{(2)}(h)=(\Delta \otimes \text{id})\circ \Delta
(h)=\sum_jz_j'\otimes z_j''\otimes z_j'''$, and
\begin{align*}
\ell_{\widetilde{T}_1}(f_1,f_2,f_3)(h)&=i^*\ell_2(k^*\ell_2(p^*f_1,p^*f_2),p^*f_3)(h)\\
&=[-,-]_L\circ (k^*\ell_2(p^*f_1,p^*f_2)\otimes p^*f_3)\circ \Delta
(h)\\
&=\sum_j(-1)^{|z_j'||f_3|}[\ell_2(p^*f_1,p^*f_2)\circ k(z_j'),f_3p(z''_j)]_L,
\end{align*}
where $\Delta (h)=\sum_jz_j'\otimes z_j''$. For a term of the form
$z_j'\otimes z_j''=\delta a\otimes h'$ the $j$th term in the above summation
equals
$$
\sum_{i}(-1)^{(|a|+1)|f_3|+|x_i'||f_2|}[[f_1p(x_i'),f_2p(x_i'')]_L,f_3 h']_L,
$$
where $\Delta(a)=\sum_ix_i'\otimes x_i''$.

We also make explicit the computations to check that
$$\langle v\otimes h; \ell'_3(f_1, f_2, f_3)\rangle=\pm \langle \widehat{d}_3(v\otimes h); f_1\wedge f_2\wedge f_3\rangle.$$
In order to simplify these computations, in what follows we set $\Delta
(h)=\delta a\otimes h'\pm h'\otimes  \delta a$ and $\Delta (a)=x'\otimes
x''\pm x''\otimes x'$.

First we compute $\widehat{d}_3(v\otimes h)$:
\begin{align*}
(T_1)_V(v)&=\Bigl((\mathcal{S}\circ d_2)\otimes \text{id}\Bigr)\circ (\mathcal{S}\circ d_2)(v)= \Bigl( (\mathcal{S}\circ d_2)\otimes \text{id}\Bigr)(u\otimes w\pm w\otimes u)\\
&=(u'\otimes u''\otimes w)\pm (u''\otimes u'\otimes w)\pm (w'\otimes w''\otimes u)\pm (w''\otimes w'\otimes u).\\
(T_2)_V(v)&=(\mathcal{S}\circ d_3)(v)\\
&=(p\otimes q\otimes r)\pm (p\otimes r\otimes q)\pm (q\otimes p\otimes r)\pm (q\otimes r\otimes p)\\ &\pm (r\otimes p\otimes q)\pm (r\otimes q\otimes p).\\
(T_1)_H(h)&=(p\otimes p\otimes \text{id})\circ (\Delta^{(1)}\circ k)\otimes p)(\delta a\otimes h'\pm h'\otimes \delta a)\\
&=(p\otimes p\otimes \text{id})(\Delta^{(1)}(a)\otimes h')=p(x')\otimes p(x'')\otimes h'\pm p(x'')\otimes p(x')\otimes h'.\\
(T_2)_H(h)&=(p\otimes p\otimes p)\circ \Delta^{(2)}\circ i (h)=\sum_jp(z_j')\otimes p(z_j'')\otimes p(z_j''').
\end{align*}
We can now apply Lemma \ref{jth part of d} or Lemma \ref{Hat differential
lemma} to obtain
\begin{align*}
\widehat{d}_3(v\otimes h)&=\frac{1}{|\text{Aut}(T_1)|}\Theta \Bigl( (T_1)_V(v), (T_1)_H(h)\Bigr)+\frac{1}{|\text{Aut}(T_2)|}\Theta \Bigl( (T_2)_V(v), (T_2)_H(h)\Bigr)\\
&=(u'\otimes p(x'))(u''\otimes p(x''))(w\otimes h')\pm (u'\otimes p(x''))(u''\otimes p(x'))(w\otimes h')\\
&\pm (u\otimes h')(w'\otimes p(x'))(w''\otimes p(x''))\pm (u\otimes h')(w'\otimes p(x''))(w''\otimes p(x'))\\
&\pm \sum_j(p\otimes p(z_j'))(q\otimes p(z_j''))(r\otimes p(z_j''')).
\end{align*}
Then, on the one hand we have
\begin{align*}
\langle \widehat{d}_3 (v\otimes h); f_1\wedge f_2\wedge f_3\rangle &=\sum_{\sigma \in S_3}\{\langle u';f_{\sigma (1)}(px')\rangle \langle u'';f_{\sigma (2)}(px'')\rangle \langle w; f_{\sigma (3)}(h')\rangle\\
&\pm \langle u';f_{\sigma (1)}p(x'')\rangle \langle u'';f_{\sigma (2)}p(x')\rangle \langle w; f_{\sigma (3)}(h')\rangle\\
&\pm \langle u;f_{\sigma (1)}(h')\rangle \langle w';f_{\sigma (2)}p(x')\rangle \langle w''; f_{\sigma (3)}p(x'')\rangle\\
&\pm \langle u;f_{\sigma (1)}(h')\rangle \langle w';f_{\sigma (2)}p(x'')\rangle \langle w''; f_{\sigma (3)}p(x')\rangle \}\\
&\pm \sum_{\sigma \in S_3}\sum_j\langle p; f_{\sigma (1)}p(z_j')\rangle \langle q; f_{\sigma (1)}p(z_j'')\rangle \langle r; f_{\sigma (3)}p(z_j''')\rangle\\
&=(\dagger)+(\ddagger).
\end{align*}
And on the other hand
$$
\langle v\otimes h; \ell'_3(f_1, f_2, f_3)\rangle=\pm\frac{1}{|\text{Aut}(T_1)|}\langle v;
\ell_{T_1}(f_1, f_2, f_3)(h)\rangle\pm \frac{1}{|\text{Aut}(T_2)|} \langle v; \ell_{T_2}(f_1, f_2, f_3)(h)\rangle.
$$
As we have seen before,
\begin{align*}
\ell_{\widetilde{T}_2}(f_1,f_2,f_3)(h) &= \pm\sum_j[f_1p(z_j'),f_2p(z_j''),f_3p(z_j''')]_L, \\
\ell_{\widetilde{T}_1}(f_1,f_2,f_3)(h) & =  \pm[[f_1p(x'),f_2p(x'')]_L,f_3(h')]_L\pm
[[f_1p(x''),f_2p(x')]_L,f_3(h')]_L,
\end{align*}
Then, applying~(\ref{pairing}) to the $L_{\infty}$-algebra $L$, we have
\begin{align*}
\frac{1}{|\text{Aut}(T_2)|}\langle v;
\ell_{T_2}(f_1, f_2, f_3)(h)\rangle&=\pm \frac{1}{3!}\sum_{\sigma \in S_3}\sum_j \langle v; [f_{\sigma (1)}p(z_j'), f_{\sigma (2)}p(z_j''), f_{\sigma (3)}p(z_j''')]_L\rangle \\
&=\pm \frac{1}{6}\sum_{\sigma \in S_3}\sum_j \langle pqr ; f_{\sigma (1)}p(z_j')\wedge f_{\sigma (2)}p(z_j'')\wedge  f_{\sigma (3)}p(z_j''')\rangle \\
&=(\ddagger);
\end{align*}
\begin{align*}
 &\frac{1}{|\text{Aut}(T_1)|}\langle v;
\ell_{T_1}(f_1, f_2, f_3)(h)\rangle\\
&=\pm \frac{1}{2}\sum_{\sigma \in S_3}\{ \langle v; [[f_{\sigma (1)}px', f_{\sigma (2)}px''], f_{\sigma (3)}h']\rangle \pm \langle v; [[f_{\sigma (1)}px'', f_{\sigma (2)}px'], f_{\sigma (3)}h']\rangle \}\\
&=\pm \frac{1}{2}\sum_{\sigma \in S_3}\{ \langle uw; [f_{\sigma (1)}px', f_{\sigma (2)}px'']\wedge f_{\sigma (3)}h'\rangle \pm \langle uw; [f_{\sigma (1)}px'', f_{\sigma (2)}px)]\wedge f_{\sigma (3)}h'\rangle \}\\
&=\pm \frac{1}{2}\sum_{\sigma \in S_3}\{\langle u;[f_{\sigma (1)}px', f_{\sigma (2)}px'']\rangle \langle w; f_{\sigma (3)}ph'\rangle\pm \langle w;[f_{\sigma (1)}px', f_{\sigma (2)}px'']\rangle \langle u; f_{\sigma (3)}ph'\rangle\\
&\pm \langle u;[f_{\sigma (1)}px'', f_{\sigma (2)}px']\rangle \langle w; f_{\sigma (3)}ph'\rangle \pm \langle w;[f_{\sigma (1)}px'', f_{\sigma (2)}px']\rangle \langle u; f_{\sigma (3)}ph'\rangle \}\\
&=\pm \frac{1}{2}\sum_{\sigma \in S_3}\{\langle u'u'';f_{\sigma (1)}px'\wedge f_{\sigma (2)}px''\rangle \langle w; f_{\sigma (3)}ph'\rangle \\
&\pm \langle w'w'';f_{\sigma (1)}px'\wedge f_{\sigma (2)}px''\rangle \langle u; f_{\sigma (3)}ph'\rangle\\
&\pm \langle u'u'';f_{\sigma (1)}px'', f_{\sigma (2)}px'\rangle \langle w; f_{\sigma (3)}ph'\rangle \pm \langle w'w'';f_{\sigma (1)}px'', f_{\sigma (2)}px'\rangle \langle u; f_{\sigma (3)}ph'\rangle \}=(\dagger).\\
\end{align*}
\end{example}
\section{Examples and applications}

We will work with models of the components of the mapping space, and for a
based map $f\colon X\to Y$, we will denote by $\map^*_f(X,Y)$ the component
containing~$f$. These $L_{\infty}$-models can be obtained via the process of
perturbation and truncation described in Section~\ref{sect:prelim}. More
explicitly, let $\varphi\colon \mathcal{L}(\overline{C})\to L$ be an
$L_{\infty}$-model of $f\colon X\to Y$. Then, the composite
$$
\psi\colon\mathcal{L}(\overline{H})\longrightarrow
\mathcal{L}(\overline{C})\longrightarrow L
$$
is also a model for~$f$. Hence, as explained in~\cite{BFM}, the induced
degree $-1$ linear map $\overline{C}\to L$, which we will keep denoting by
$\varphi$, is a Maurer--Cartan element of the
$L_{\infty}$\nobreakdash-algebra $\Hom(\overline{C},L)$, and the induced map
$\overline{H}\to \overline{C}\to L$ is a Maurer--Cartan element in
$\Hom(\overline{H},L)$. Therefore, the perturbed and truncated
$L_{\infty}$-algebras $\Hom(\overline{C},L)^{(\varphi)}$ and
$\Hom(\overline{H},L)^{(\psi)}$ are $L_{\infty}$-models for $\map_f^*(X,Y)$.

\subsection{Formality, coformality and mapping spaces}
We recall that $X$ is {\em formal} if $H^*(X;\mathbb{Q})$ equipped with the
cup product is a differential graded algebra model for the space $X$ or,
equivalently, if $X$ has  a Quillen minimal model $(\mathbb{L}(V),\partial )$
whose differential is quadratic $\partial=\partial_2$. In the same way $Y$ is
{\em coformal} if $\pi_*(\Omega Y)\otimes \mathbb{Q}$ equipped with the
Samelson product is a differential graded Lie algebra  model for $Y$ or,
equivalently, if $Y$ has a Sullivan minimal model $(\Lambda V, d)$ whose
differential is quadratic $d=d_2$.

The graded vector space which carries the $L_\infty$-structure modeling the
mapping space is of the form
$$
\text{Hom}(H_*(X;\mathbb{Q}),\pi_*(\Omega Y)\otimes \mathbb{Q}),
$$
although the source and target spaces $X$ and $Y$ are not necessarily formal
and coformal. We will now show examples in which the source and/or the target
are not formal and coformal.

\begin{example}
Consider the rational space $X$ with Sullivan model $A=(\Lambda (a, b, c),
d)$, where $|a|=|b|=3$, $|c|=5$, $da=db=0$ and $dc=ab$. Note that this space
is coformal but not formal as we will see below. Let $Y$ be the rational
space with Sullivan model $(\Lambda (x, y, z, t), d)$ where $|x|=4$, $|y|=7$,
$|z|=10$, $|t|=16$ and $dx=dy=0$, $dz=xy$, $dt=yz$.

We will describe the rational homotopy type of $\text{map}^*(X, Y)$ by
studying the $L_\infty$\nobreakdash-struc\-tu\-re on
$\text{Hom}(H_+(X;\mathbb{Q}),\pi_*(\Omega Y)\otimes \mathbb{Q})$. Note that
this mapping space has only one component, the one which contains the
constant map since the only morphism of differential graded algebras
$$\varphi \colon (\Lambda (x, y, z, t), d)\to (\Lambda (a, b, c), d)$$
is the zero morphism $\varphi =0$ by degree reasons.

We have that, as graded Lie algebras,
$$\pi_*(\Omega \text{map}^*(X, Y))\otimes \mathbb{Q}\cong \text{Hom}(H_+(X; \mathbb{Q}), \pi_*(\Omega Y)\otimes \mathbb{Q}),$$
since the fact that $\varphi=0$ implies that the bracket $\ell_1$ in the
$L_\infty$-structure given by Theorem \ref{main_thm} is zero, or
equivalently, the differential in the reduced Brown-Szczarba model has no
linear term. However, as we will show the mapping space is not coformal since
the differential in its Sullivan model is not purely quadratic.

In particular, we will see that $\text{map}^*(X, Y)$ splits rationally as
$$
Z\times (S^7\times S^7\times S^{13}\times S^{13}),
$$
where $Z$ is the rational space with minimal model
$$
(\Lambda (a_1, b_1, a_2, b_2, a_4, b_4, z_5, x_8, y_8), d),
$$
where $da_1=\cdots =db_4=0$, $dz_5=a_4b_2-b_4a_2$,
$dx_8=-a_4a_1b_4+a_4^2b_1$, and $dy_8=-b_4^2a_1+b_4b_1a_4$.

This calculation can be made directly from the Haefliger \cite{Hae82} or the
Brown-Szczarba model \cite{BS97} but, in order to illustrate the homotopy
transfer techniques described in this paper, we will make use of Theorem
\ref{main_thm} instead.

First we describe the $A_\infty$-coalgebra structure induced by the higher
Massey coproducts on $H_+(X;\mathbb{Q})$. As a graded vector space,
$\overline{A}$ is given by
$$\overline{A}=A^3\oplus A^5\oplus A^6\oplus A^8\oplus A^{11}=\langle a, b\rangle
\oplus \langle c \rangle \oplus \langle ab\rangle \oplus \langle ac, bc\rangle \oplus \langle abc\rangle.$$
The dual coalgebra $\overline{C}=(\overline{A})^\sharp$ is
$$\overline{C}=C_3\oplus C_5\oplus C_6\oplus C_8\oplus C_{11}=\langle g, h\rangle
\oplus \langle r \rangle \oplus \langle s\rangle \oplus \langle u, v\rangle \oplus \langle w\rangle,$$
with $\delta s=r$ and reduced comultiplications
\begin{align*}
\overline{\Delta }g=\overline{\Delta }h=\overline{\Delta }r&=0,\\
\overline{\Delta }s&=g\otimes h -h \otimes g,\\
\overline{\Delta }u&=g\otimes r-r\otimes g,\\
\overline{\Delta }v&=h \otimes r-r\otimes h, \\
\overline{\Delta }w&=g\otimes v-v\otimes g-h\otimes u+u\otimes h +s\otimes r+r\otimes s.
\end{align*}
The decomposition $\overline{C}=S\oplus \delta S\oplus \overline{H}$, where
$S=\langle s\rangle$, $\delta S=\langle r\rangle$ and $\overline{H}=\langle
g, h\rangle \oplus  \langle u, v\rangle \oplus \langle w\rangle$ induces the
homotopy retract
\begin{equation*}
\xymatrix{ \ar@(ul,dl)@<-7ex>[]_k  & (\overline{C},\delta )
\ar@<0.75ex>[r]^-p & (\overline{H}, 0), \ar@<0.75ex>[l]^-i }
\end{equation*}
where $k(r)=s$. Note that $X$ is not formal since the transferred structure
in $\overline{H}$ (which gives rise to the differential on the Quillen
minimal model as  shown in Theorem \ref{modelo_minimal}) has non-zero higher
order coproducts. Indeed,
\begin{align*} \overline{\Delta}_3'(u)&=(p\otimes p\otimes \text{id}
)\circ (\overline{\Delta} \otimes \text{id} )\circ (k\otimes p)\circ\overline{\Delta}
\circ i(u)\\
&+(\text{id}\otimes p\otimes p)\circ (\text{id}\otimes\overline{\Delta})
\circ (p\otimes k)\circ\overline{\Delta} \circ i(u)\\
&=-(p\otimes p\otimes \text{id}) ( \overline{\Delta }(s)\otimes g)+(\text{id}\otimes p\otimes p)(g\otimes \overline{\Delta }(s))\\
&=(g\otimes g\otimes h)-2(g\otimes h\otimes g)+(h\otimes g\otimes g).
\end{align*}

On the other hand, the model for $Y$ is of the form
$\mathcal{C}^*(L)=(\Lambda (x, y, z, t), d)$ for some $L_\infty$-algebra $L$
with $\langle x, y, z, t\rangle \cong (sL)^\sharp$, and brackets induced by
the differential $d$ by formula \eqref{pairing}. We write $L=\langle x', y',
z', t'\rangle $ with $|x'|=3$, $|y'|=6$, $|z'|=9$, $|t'|=15$.

With the above data we can define a homotopy retract
\begin{equation*}
\xymatrix{ \ar@(ul,dl)@<-7ex>[]_{k^*}  & \text{Hom}(\overline{C},\langle x', y', z', t'\rangle )
\ar@<0.75ex>[r]^-{i^*} & \text{Hom}(\overline{H}, \langle x', y', z', t'\rangle) \ar@<0.75ex>[l]^-{p^*}
}
\end{equation*}
where $\text{Hom}(\overline{H}, \langle x', y', z', t'\rangle)\cong
\text{Hom}(H_+(X;\mathbb{Q}),\pi_*(\Omega Y)\otimes \mathbb{Q})$ as graded
vector spaces. Then we have that $ \text{Hom}(H_+(X;\mathbb{Q}),\pi_*(\Omega
Y)\otimes \mathbb{Q})=K_*$, with
$$
K_*=K_{-8}\oplus K_{-5}\oplus K_{-2}\oplus K_0\oplus K_1\oplus K_3\oplus K_4\oplus K_6\oplus K_7\oplus K_{12},
$$
where $K_{-8}=\langle f_{w}^{x'}\rangle$ with $f_{w}^{x'}(w)=x'$ and
$f_{w}^{x'}(\Phi )=0$ if $\Phi $ is any other basis element, and
\begin{align*}
K_{-5}=\langle f_{w}^{y'}, f_{u}^{x'}, f_{v}^{x'}\rangle ,\ K_{-2}=
\langle f_{u}^{y'}, f_{v}^{y'}, f_{w}^{z}\rangle,\  K_{0}=\langle f_{g}^{x'}, f_{h}^{x'} \rangle ,\ K_{1}=
\langle f_{u}^{z'}, f_{v}^{z'} \rangle,\\
K_{3}=\langle f_{g}^{y'}, f_{h}^{y'} \rangle ,\
K_{4}=\langle f_{w}^{t'}\rangle ,\
K_6=\langle f_{g}^{z'}, f_{h}^{z'} \rangle ,\
K_7=\langle f_{u}^{t'}, f_{v}^{t'} \rangle ,\
K_{12}=\langle f_{g}^{t'}, f_{h}^{t'} \rangle.
\end{align*}
The higher brackets can be computed using the explicit formula given in
Theorem~\ref{htt}(ii), but the only Maurer--Cartan element will be the zero
element since there is only one component. So, in order to compute the model
of the mapping space we must only discard the elements of negative degree. We
give some explicit computations. In order to compute $\ell'_2$ we have to
consider the tree
$$\xymatrixcolsep{1pc} \xymatrixrowsep{1pc} \entrymodifiers={=<1pc>}
\xymatrix{
*{^{p^*}}\ar@{-}[rd]&&*{^{p^*}}\ar@{-}[ld]\\
&\ell_2 \ar@{-}[d]&\\
&{_{i^*}}&
}$$
For the element $w$ we have that
\begin{align*}
\ell'_2(f_g^{y'}, f_v^{z'})(w) &=(\ell_2)_L\circ \bigl( (f_g^{y'}\circ p)\otimes (f_v^{z'}\circ p)\bigr)\circ \overline{\Delta }(w)\\
 &=(\ell_2)_L\circ \bigl( (f_g^{y'}\circ p)\otimes (f_v^{z'}\circ p)\bigr)(g\otimes v-v\otimes g-h\otimes u\\
 &+u\otimes h+s\otimes r+r\otimes s)\\
 &=(\ell_2)_L(y',z')=t',
\end{align*}
and it is easy to check that $\ell'_2(f_g^{y'}, f_v^{z'})$ vanishes in the
rest of basis elements. Hence $\ell'_2(f_g^{y'}, f_v^{z'})=f_w^{t'}$. For
computing $\ell_3'$ we have to use the tree

$$
\xymatrixcolsep{1pc} \xymatrixrowsep{1pc} \entrymodifiers={=<1pc>}
\xymatrix{
*{^{p^*}}\ar@{-}[dr]&& *{^{p^*}}\ar@{-}[dl] & *{}&*{^{p^*}}\ar@{-}[ddll] \\
& \ell_2\ar@{-}[dr]|{k^*}&   && \\
  &*{} & \ell_2\ar@{-}[d] & *{} & \\
&*{} & {_{i^*}} & *{}&  }
$$
For the element $u$, we have that

\begin{align*}
\ell'_3(f_g^{y'}, f_h^{x'}, f_h^{y'})(u) &=i^*\ell_2 \bigl( k^*(\ell_2(p^*f_g^{y'}, p^*f_h^{x'})), p^*f_h^{y'}\bigr) (u)\\
&=(\ell_2)_L\circ (k^*(\ell_2(p^*f_g^{y'}, p^*f_h^{x'}))\otimes p^*f_h^{y'})\circ \overline{\Delta }(u)\\
&=(\ell_2)_L\circ (k^*(\ell_2(p^*f_g^{y'}, p^*f_h^{x'}))\otimes p^*f_h^{y'})(g\otimes r-r\otimes g)\\
&=(\ell_2)_L\bigl( \ell_2(p^*f_g^{y'}, p^*f_h^{x'})k(r), f_g^{y'}(g)\bigl)\\
&=(\ell_2)_L\bigl( \ell_2(p^*f_g^{y'}, p^*f_h^{x'})(s), f_g^{y'}(g)\bigl)\\
&=-(\ell_2)_L\bigl( (\ell_2)_L(f_g^{y'}(g), f_h^{x'}(h), y'\bigr)\\
&=-(\ell_2)_L\bigl( (\ell_2)_L(y', x'), y'\bigr)=(\ell_2)_L(z', y')=-t'.
\end{align*}
Again, it is easy to check that $\ell'_3(f_g^{y'}, f_h^{x'}, f_h^{y'})$
vanishes in the rest of basis elements. Hence $\ell'_3(f_g^{y'}, f_h^{x'},
f_h^{y'})=-f_u^{t'}$.

We can compute the cochain functor of this $L_\infty$-algebra
$$
\mathcal{C}^*(\text{Hom}(\overline{H},L) )=(\Lambda ((sL)^\sharp\otimes \overline{H}), d),
$$
 and the rational homotopy type of $\text{map}^*(X, Y)$ is modeled by
$$
(\Lambda ((sL)^\sharp\otimes \overline{H})^{>0}, \widehat{d})
$$
as described in Section~\ref{sect:rational_models_map}, which coincides with
the reduced Brown--Szczarba model. Explicitly, $((sL)^\sharp\otimes
\overline{H})^{>0}$ is concentrated in degrees $1,2,4,5,7,8$ and $13$,
 $$((sL)^\sharp\otimes \overline{H})^{1}=\langle x\otimes g, x\otimes h\rangle,\ ((sL)^\sharp\otimes \overline{H})^{2}=\langle z\otimes u, z\otimes v\rangle,$$
  $$((sL)^\sharp\otimes \overline{H})^{4}=\langle y\otimes g, y\otimes h\rangle,\ ((sL)^\sharp\otimes \overline{H})^{5}=\langle t\otimes w\rangle,$$
   $$((sL)^\sharp\otimes \overline{H})^{7}=\langle z\otimes g, z\otimes h\rangle,\ ((sL)^\sharp\otimes \overline{H})^{8}=\langle t\otimes u, t\otimes v\rangle,$$
   $$((sL)^\sharp\otimes \overline{H})^{13}=\langle t\otimes g ,t\otimes h \rangle,$$
and the differentials are given by
\begin{align*}
\widehat{d}(x\otimes g)&=\widehat{d}(x\otimes h)=\widehat{d}(z\otimes u)=\widehat{d}(z\otimes v)
=\widehat{d}(y\otimes g)=\widehat{d}(y\otimes h)\\
&=\widehat{d}(z\otimes g)=\widehat{d}(z\otimes h)=\widehat{d}(t\otimes g)=\widehat{d}(t\otimes h)=0,\\
\widehat{d}(t\otimes w)&=(y\otimes g)(z\otimes v)-(y\otimes h)(z\otimes u),\\
\widehat{d}(t\otimes u)&=-(y\otimes g)(x\otimes g)(y\otimes h)+ (y\otimes g)^2(x\otimes h),\\
\widehat{d}(t\otimes v)&=-(y\otimes h)^2(x\otimes g)+ (y\otimes h)(x\otimes h)(y\otimes g).
\end{align*}
Thus, we obtain the rational equivalence
$$\text{map}^*(X, Y)\simeq_\mathbb{Q} Z\times (S^7\times S^7\times S^{13}\times S^{13}).$$
\end{example}

\subsection{$H$-space structures} Another interesting question is to determine
whether a mapping space is of the rational homotopy type of an $H$-space,
that is, it has a Sullivan minimal model with zero differential, in terms of
the source and target spaces.

Recall that for a space $X$, the \emph{differential length} $\dl(X)$ is the
least integer $n$ such that there is a non-trivial Whitehead product of order
$n$ on $\pi_*(X)\otimes\mathbb{Q}$. This number coincides with the least $n$
for which the $n$-th part of the differential of the Sullivan minimal model
of $X$ is non trivial; see~\cite{AA}. If there is not such an $n$, then
$\dl(X)=\infty$. Dually, the \emph{bracket length} $\bl(X)$ is the length of
the shortest non-zero iterated bracket in the differential of the Quillen
minimal model of $X$. If the differential is zero, then $\bl(X)=\infty$.

The \emph{rational cone length} $\cl(X)$ is the least integer $n$ such that
$X$ has the rational homotopy type of an $n$-cone; see~\cite[p.\ 359]{FHT}.
The \emph{rational Whitehead length} $\Wl(X)$ is the length of the longest
non-zero iterated Whitehead bracket in $\pi_{\ge 2}(X)\otimes\mathbb{Q}$. In
particular, if $\Wl(X)=1$, then all Whitehead products vanish.

In~\cite[Theorem 2]{FT05} a necessary condition, in terms of the Toomer
invariant, is given in order to ensure that the component of the constant map
is an $H$-space. In~\cite[Theorem 4]{BM08} it was proved that if
$\cl(X)<\dl(Y)$ then \emph{all} the components $\map_f^*(X, Y)$ are
rationally $H$-spaces. A dual result (in the sense of Eckmann-Hilton) was
given in~\cite[Theorem~1.4(2)]{Bui09}, by assuming that $Y$ is a coformal
space. In this case, if $\Wl(Y)<\bl(X)$ then all the components $\map_f^*(X,
Y)$ are again rationally $H$-spaces. In this paper we formulate a variant of
this last result that does not implicitly assume the coformality of~$Y$.

\begin{theorem}\label{main02}
If $\cl(X)=2$ and $\Wl(Y)<\bl(X)$, then all the components of the mapping
space $\map^*(X,Y)$ are rationally $H$-spaces.
\end{theorem}

\begin{proof}
Let $L$ be the minimal $L_\infty$-model of $Y$. Following~\cite{Cor94} or
\cite[Theorem 29.1]{FHT}, and since $\cl(X)=2$, we may choose $C$ a coalgebra
model of $X$ with conilpotence~$2$, that is, such that the iterated
coproducts of length greater or equal than $2$ are zero. Consider the
$L_\infty$\nobreakdash-model $\Hom(H,L)^{(\phi )}$ of the component
corresponding to $\phi$ given in Theorem~\ref{main_thm}. In order to prove
that $\map_f^*(X,Y)$ is an $H$-space we will show that all brackets
$\ell_k^{\phi}$ for $k\geq 2$ vanish, and thus
$\mathcal{C}^{*}(\Hom(H,L)^{(\phi)})$ is of the form $(\Lambda S, d_1)$, for
which $(\Lambda H(S, d_1),0)$ is a Sullivan minimal model. It is enough to
check that $\ell'_k=0$ for all $k$ since this implies that
${\ell'}^{\phi}_k=0$ for all $k$.

Now, all the brackets are represented by summations on trees. But due to the
conilpotence condition on $C$ only the binary ones contribute to the
brackets. Then as we have pointed in Remark~\ref{modelo_minimal}, the
operations given by these trees are defined in terms of the differential in
the minimal model of $X$. Since the bracket length is the length of the
shortest non-zero iterated bracket appearing in the differential of the
Quillen minimal model, the corresponding binary tree is expressed by an
iterated bracket of this length in $L$, and hence it must vanish.
\end{proof}

We illustrate the last part of the proof by showing that $\ell_4'=0$. This
operation is defined as a sum of maps whose terms are indexed by the
following five trees:

{\tiny$$ \xymatrixcolsep{.3pc} \xymatrixrowsep{.3pc}
\entrymodifiers={=<.5pc>} \xymatrix{
&{^{p^*}}\ar@{-}[ddrr]&&{^{p^*}}\ar@{-}[dd] & &{^{p^*}}\ar@{-}[ddll]&&{^{p^*}}\ar@{-}[dddlll]&   &{^{p^*}}\ar@{-}[dr]&&{^{p^*}}\ar@{-}[dl]&&{^{p^*}}\ar@{-}[dddl]   &                   &{^{p^*}}\ar@{-}[dddlll]  &   &*{^{p^*}}\ar@{-}[dddrrr]&&*{^{p^*}}\ar@{-}[dddr]  &                   &*{^{p^*}}\ar@{-}[dddl]     &                      &*{^{p^*}}\ar@{-}[dddlll]  \\
& && && &&  &  & &\ell_2\ar@{-}[ddrr]_{k^*}&  && & & & & && & & &&              \\
&  &&\ell_3\ar@{-}[dr]_{k^*}& & && & & && & &&& & & && & && &         \\
& && &\ell_2\ar@{-}[d] &               &&                   &   &                   &&               &\ell_3\ar@{-}[d] &&                   &                 &   &                    &&                    &\ell_4\ar@{-}[d] &                     &                      &                  \\
&                 &&                   &{_{\stackrel{}{i^*}}}& &&
&   &                   && &{_{\stackrel{}{i^*}}}&&                   &
&   & &&                    &{_{\stackrel{}{i^*}}}&                     & & }
$$}
\vspace{0.2cm}
{\tiny $$\xymatrixcolsep{.3pc} \xymatrixrowsep{.3pc}
\entrymodifiers={=<.5pc>} \xymatrix{ & *{^{p^*}}\ar@{-}[dr]& &
*{^{p^*}}\ar@{-}[dl]      &
&*{^{p^*}}\ar@{-}[ddll]&&*{^{p^*}}\ar@{-}[dddlll]&
&*{^{p^*}}\ar@{-}[dr]&&*{^{p^*}}\ar@{-}[dl]
&&*{^{p^*}}\ar@{-}[dr]&&*{^{p^*}}\ar@{-}[dl]  \\
           &                & \ell_2\ar@{-}[dr]_{k^*}&                       &                    &                  &&                    &
&&\ell_2\ar@{-}[ddrr]_{k^*}&&&&\ell_2\ar@{-}[ddll]^{k^*}&\\
&                &                      & \ell_2\ar@{-}[dr]_{k^*} &                    &                  &&                    &  &&&&&&&\\
           &                &                      &                       &\ell_2\ar@{-}[d]  &                  &&                    &   &&&&\ell_2\ar@{-}[d]&&&\\
           &                &                      &                       & {_{\stackrel{}{i^*}}}&                  &&                   &   &&&& {_{\stackrel{}{i^*}}}&&&
   }
$$}
Note that the first three trees do not contribute to $\ell'_4$ since they are not binary. For example, for the second tree we have that
\begin{gather*}
i^*\ell_3(k^*\ell_2(p^*f_1, p^* f_2), p^* f_3, p^* f_4)(h)  \\
=[-,-,-]_L\circ (k^*\ell_2(p^*f_1, p^* f_2)\otimes p^* f_3\otimes p^* f_4)\circ \overline{\Delta}^{(2)}(h)
\end{gather*}
is zero. Indeed, $\overline{\Delta}^{(2)}=(\Delta\otimes \text{id})\circ \Delta \colon C\to C\otimes C\otimes C$ is zero since $\cl(X)=2$.

The last two trees do not contribute to $\ell_4'$ either, since
$\Wl(Y)<\bl(X)$. Thus, for example, for any $h\in\overline{H}$, the fourth
tree yields
$$
i^*\ell_2(k^*\ell_2(k^*\ell_2(p^*f_1,p^* f_2), p^*f_3), p^* f_4)(h).
$$
A generic term in this expression is given by
\begin{equation}
\sum_{i}[[f_1 px'_i, f_2 px''_i]_L, f_3 py''_j]_L, f_4 pz''_l]_L,
\label{formulo}
\end{equation}
where $\overline{\Delta}(h)=\sum_l z'_l\otimes z_l''$, $z'_l=\delta a$ and
hence $kz'_l=a$, $\overline{\Delta}(a)=\sum_j y'_j\otimes y''_j$,
$y'_j=\delta a'$ and hence $ky'_j=a'$, and $\overline{\Delta}(a')=\sum_i
x'_i\otimes x''_i$. If $px'_i$, $px''_i$, $py''_j$, $pz''_l\ne 0$, then by
the recursive formula~(\ref{req_for}) of Proposition~\ref{modelo_minimal},
the term
$$
[[s^{-1}px'_i,s^{-1}px''_i], s^{-1}py'_j],s^{-1}pz''_l]
$$
appears in the expression of $\partial s^{-1}h$, where $\partial$ is the
differential of the Quillen minimal model of $X$. Therefore, (\ref{formulo})
must be zero, since $\Wl(Y)<\bl(X)$.

\begin{remark}
We can give an alternative proof of Theorem~\ref{main02} by relying on
\cite[Theorem~1.4(2)]{Bui09}, where the same result is obtained, but under
the assumptions $\Wl(Y)<\bl(X)$ and coformality of the target space $Y$.
Indeed, let $L$ be a minimal $L_{\infty}$-model of $Y$ and denote by $L^{\rm
cof}$ the differential graded Lie algebra obtained from~$L$ by discarding all
higher brackets except the binary one. The graded Lie algebra $L^{\rm cof}$
represents a coformal rational space $Y^{\rm cof}$ such that
$\Wl(Y)=\Wl(Y^{\rm cof})$. Let $C$ be a coalgebra model of $X$ such that
$\overline{C}$ has conilpotence $\cl(X)=2$. Then the convolution
$L_{\infty}$-algebra $\Hom(\overline{C}, L)$ is isomorphic to the convolution
differential graded Lie algebra $\Hom(\overline{C}, L^{\rm cof})$. Therefore
$\map^*(X,Y)\simeq \map^*(X, Y^{\rm cof})$. \label{ref_rem}
\end{remark}

\begin{example}
Let $X=S_a^7\vee S_b^7\cup_{\gamma} e^{20}$, let $\alpha$, $\beta\in\pi_7(X)$
be the elements represented by $S_a^7$ and $S_b^7$, respectively, and let
$\gamma=[\alpha,[\alpha,\beta]]\in\pi_{19}(X)$. Then, the Quillen minimal
model for $X$ is $(\mathbb{L}(W),\partial)$, where $W=\langle a,b,c\rangle$,
$|a|=|b|=6$ and the only non-zero differential is $\partial c=[a,[a,b]]$.
Hence, it is clear that $\bl(X)=3$. Moreover, $\cl(X)=2$ since $W=W_0\oplus
W_1$ with $\partial W_0=0$ and $\partial W_1\subseteq \mathbb{L}(W_0)$; see,
for example, \cite[Theorem 29.1]{FHT}.

Let $Y$ be the space with Sullivan minimal model of the form  $(\Lambda (u,
v, w), d)$, $|u|=2, |v|=4$, $|w|=7$, $du=dv=0$, $dw=u^4+v^2$.

Therefore, $Y$ is a non-coformal space with $\dl(Y)=\Wl(Y)=2$. Moreover,
there are non null-homotopic maps from $X$ to $Y$. For example, $\phi\colon
\Lambda V\to \mathcal{C}^{*}(\mathbb{L}(a,b,c))=\Lambda
s\mathbb{L}(a,b,c)^{\sharp}$, defined by $\phi(u)=\phi(v)=0$ and
$\phi(w)=sa^{\sharp}$.

Then, by Theorem~\ref{main02}, \emph{every} component of the mapping space
$\map^*(X,Y)$ is rationally an $H$-space. This example provides a situation
where we cannot apply neither~\cite[Theorem 4]{BM08}, since $\cl(X)=\dl(Y)$,
nor \cite[Theorem 1.4(2)]{Bui09}, since $Y$ is not coformal.
Also~\cite[Theorem 2]{FT05} does not provide any information for the
component of a non-constant map.
\end{example}

\end{document}